\theoremstyle{plain}
\newtheorem{theorem}{Theorem}[section]
\newtheorem{lemma}[theorem]{Lemma}
\newtheorem{claim}[theorem]{Claim}
\newtheorem{proposition}[theorem]{Proposition}
\newtheorem{problem}{Problem}
\theoremstyle{definition}
\newtheorem{definition}[theorem]{Definition}
\newtheorem{example}[theorem]{Example}
\title{Every CBER is smooth below the Carlson-Simpson generic partition}
\author{Aristotelis Panagiotopoulos}
\address{Department of Mathematical Sciences, Carnegie Mellon University}
\email{aristotelis.panagiotopoulos@gmail.com}
\author{Allison Wang}
\address{Department of Mathematical Sciences, Carnegie Mellon University}
\email{ayw2@andrew.cmu.edu}
\begin{document}
\maketitle

\begin{abstract}
Let $E$ be a countable Borel equivalence relation on the space $\mathcal{E}_{\infty}$ of all infinite partitions of the natural numbers. We show that $E$ coincides with equality below a Carlson-Simpson generic element of $\mathcal{E}_{\infty}$. In contrast, we show that there is a hypersmooth equivalence relation on $\mathcal{E}_{\infty}$ which is Borel bireducible with $E_1$ on every Carlson-Simpson cube. Our arguments are  classical and require no background in forcing.
\end{abstract}

\section{Introduction}

One of the prominent ongoing research programs of descriptive set theory concerns the complexity theory of classification problems. Formally, a classification problem is any pair $(X,E)$ where $X$ is a Polish space and $E$ is any equivalence relation on $X$ which is analytic as a subset of $X\times X$. A classification problem $(X,E)$ is considered to be of less or equal complexity to the classification problem $(Y,F)$ if there is a {\bf Borel reduction} from $E$ to $F$, \textit{i.e.}, if there is a Borel map $f\colon X\to Y$ so that $x E x' \iff f(x) F f(x')$ for all $x,x'\in X$. We write $(X,E)\leq_B (Y,F)$, or simply $E\leq_B F$, when $E$ is Borel reducible to $F$. In the lower part of the Borel reduction hierarchy one finds the complexity class of all Countable Borel Equivalence Relations (CBERs).

\subsection{Countable Borel Equivalence Relations} The class of CBERs consists of all classification problems $(X,E)$ so that $E$ is Borel and each $E$-class $[x]_E\subseteq X$ is countable. The class of CBERs has received significant attention over the years, partially due to its connections to ergodic theory. On the one hand, measure-preserving actions $\Gamma\curvearrowright X$ of countable discrete groups $\Gamma$ naturally give rise to CBERs by setting $xEx' \iff \exists \gamma \in \Gamma (\gamma x=x')$. On the other hand, it seems that the only methods we currently have at our disposal for discerning distinct complexities within the class of CBERs make essential use of measure-theoretic arguments.

To elaborate a bit more on the latter point, we recall some facts about the structure of the quasi-ordering $\leq_B$ within the class of CBERs; for a comprehensive survey, see \cite{CBER}. The $\leq_B$-least CBER is equality $=$ on $2^{\mathbb{N}}$. The $\leq_B$-successor of $(2^{\mathbb{N}},=)$ is  $(2^{\mathbb{N}},E_0)$, where $E_0$ is eventual equality: $x E_0 y$ if there is $n_0\in \mathbb{N}$ so that for all $n\geq n_0$ we have  $x(n)=y(n)$. We say that  $(X,E)$ is  {\bf smooth} if  $(X,E)\leq_B (2^{\mathbb{N}},=)$. We say that it is {\bf hyperfinite} if $(X,E)\leq_B (2^{\mathbb{N}},E_0)$. The class of CBERs admits a $\leq_B$-top element $E_{\infty}$. This is the orbit equivalence relation of the Bernoulli shift $F_2\curvearrowright 2^{F_2}$ of the free group with $2$ generators. We say that $(X,E)$ is {\bf intermediate} if $E_0<_B E <_B E_{\infty}$.

The structure of intermediate CBERs is very complex. For example, the partial order of Borel subsets of the reals under inclusion embeds in the quasi-order  induced by  $\leq_B$ on CBERs; see  
\cite{AK}. However, all known proofs of existence of intermediate CBERs, as well as all known arguments for comparing intermediate CBERs with respect to $\leq_B$, use measure-theoretic methods and cocycle superrigidity techniques of ergodic theory. A partial explanation as to why this is the case comes from a result of Hjorth and Kechris, which states that every CBER $E$ on a Polish space $X$ is hyperfinite when restricted to an appropriately chosen $E$-invariant  comeager $C\subseteq X$; see \cite[Theorem 6.2]{HK}. In other words, the intrinsic complexity of a CBER sitting above $E_0$ hides within a negligible part of $X$ from the point of view of Baire category. This renders Baire-category methods too crude for the fine structure of the complexity class of CBERs.

Given that the last four decades have generated several open problems about CBERs that remain open to this day (\textit{e.g.}, Martin's conjecture, see  \cite[Problem 11.5]{CBER}; or Weiss' problem on amenable actions, see \cite[Problem 7.27]{CBER}), 
one would like to have methods aside from measure for discerning complexities beyond $E_0$. In particular, one would like to know which notions of ``largeness,'' alternative to measure or Baire category, interact non-trivially with the intrinsic complexity of intermediate CBERs. To make this precise, we recall some definitions from \cite{KSZ}. Let $\mathcal{I}$ be a $\sigma$-ideal on a Polish space and let $E$ be an analytic equivalence relation on $X$. We say that {\bf $E$ is in the spectrum of $\mathcal{I}$} if for every Borel $B\subseteq X$ with $B\not\in \mathcal{I}$ there is a   Borel $C\subseteq B$ with $C\not\in \mathcal{I}$ so that $E$ and $E\upharpoonright C$ are Borel bireducible. On the other hand, we say that $\mathcal{I}$ {\bf canonizes $E$} to some $F\leq_B E$, if for every Borel $B\subseteq X$ with $B\not\in \mathcal{I}$ there is a   Borel $C\subseteq B$ with $C\not\in \mathcal{I}$ so that $E \upharpoonright C \leq_B F$. In this terminology, all known intermediate CBERs are in the spectrum of the $\sigma$-ideal $\mathcal{I}_{\mu}$ of $\mu$-null sets for some appropriately chosen $\sigma$-additive measure, while the aforementioned result \cite[Theorem 6.2]{HK} shows that the $\sigma$-ideal $\mathcal{I}_*(X)$ of all meager sets of a Polish space $X$ canonizes every CBER to $E_0$ in a very strong way.

\subsection{Ramsey notions of ``largeness''} Ramsey theory provides a plethora of $\sigma$-ideals whose relationship with the complexity of CBERs is not yet well understood. Recall that every topological Ramsey space $(\mathcal{R},\leq ,r)$ comes together with two topologies: its Polish topology and its Ellentuck topology; see \cite{Ramsey}. We denote by $\mathcal{I}(\mathcal{R})$ the collection of all Ellentuck-meager subsets of $\mathcal{R}$. By Borel or analytic subsets of $\mathcal{R}$ or $\mathcal{R}^2$  we mean Borel or analytic in the Polish topology of $\mathcal{R}$.
For the classical Ramsey space $[\mathbb{N}]^{\mathbb{N}}$ of all infinite subsets of $\mathbb{N}$, classical results of Mathias  \cite{Mathias} regarding the extensions of a model of set theory by Mathias-generic reals show that $\mathcal{I}([\mathbb{N}]^{\mathbb{N}})$ canonizes every CBER to $E_0$; see \cite[Theorem 8.17]{KSZ}. Similar results hold for the Milliken space $(\mathbb{N})^{\mathbb{N}}$ of  infinite sequences $(A_n)_n$ of finite sets of natural numbers with the property that $\min(A_{n+1}) > \max(A_n)$ for every $n\in\mathbb{N}$, where $(A_n)_n\leq (B_n)_n$ if for all $m\in\mathbb{N}$ we have that  $A_m$ is a union of elements of $(B_n)_n$. By \cite[Theorem 8.28]{KSZ},  $\mathcal{I}((\mathbb{N})^{\mathbb{N}})$  canonizes every CBER to $E_0$ as well.

It turns out that there exist Ramsey spaces which canonize CBERs to $(2^{\mathbb{N}},=)$. In fact, all Ramsey spaces which have previously been shown to have this property satisfy a much stronger canonization property: they canonize {\bf any analytic equivalence relation} to $(2^{\mathbb{N}},=)$. The earliest such example is the Halpern--L\"auchli--Milliken space $\mathcal{S}_{\infty}(2^{<\mathbb{N}})$ of strong subtrees of the complete binary tree $2^{<\mathbb{N}}$; see  \cite[Theorem 8.1]{KSZ}. The second known example of a Ramsey-theoretic ideal with similarly strong canonization properties comes from the Carlson-Simpson dual Ramsey theorem  \cite{CS}  for finite partitions of the natural numbers. For every positive $k\in\mathbb{N}$ let $\mathcal{E}_k$ be the collection of all equivalence relations $A$ on $\mathbb{N}$ with exactly $k$-many classes. Let $\mathcal{E}_{\infty}$ be the collection of all equivalence relations $A$ on $\mathbb{N}$ with infinitely many classes. We write $A\leq B$ if the equivalence relation $A$ is a coarsening of $B$; see Section \ref{Section:Def} for definitions. For every $\kappa\in \{1,2,\ldots\}\cup\{\infty\}$ one can define a $\sigma$-ideal $\mathcal{I}(\mathcal{E}_{\kappa})$ on $\mathcal{E}_{\kappa}$ as follows. First, for every $m\in\mathbb{N}$ and $B\in\mathcal{E}_{\infty}$, let $[m,B]_{\kappa}$ be the set of all $A\in\mathcal{E}_{\kappa}$ so that $A\leq B$ and $A,B$ agree on $\{0,\ldots,m-1\}$. Now suppose $\mathcal{X}\subseteq \mathcal{E}_{\kappa}$. We set $\mathcal{X}\in\mathcal{I}(\mathcal{E}_{\kappa})$ if for every $B\in\mathcal{E}_{\infty}$ and $m\in\mathbb{N}$, there is $B'\in [m,B]_{\infty}$  so that for all $A\in [m,B']_{\kappa}$ we have that $A\not\in \mathcal{X}$; see  \cite[Section 5.6]{Ramsey}. In \cite{Doucha} it is shown that if $\kappa$ is finite, then $\mathcal{I}(\mathcal{E}_{\kappa})$ canonizes any analytic equivalence relation to $(2^{\mathbb{N}},=)$.

\subsection{Main results}
In this paper we study the canonization properties of the $\sigma$-ideal $\mathcal{I}(\mathcal{E}_{\infty})$ associated to the topological Ramsey space $\mathcal{E}_{\infty}$ of infinite partitions of $\mathbb{N}$. We first show that, unlike $\mathcal{E}_{\kappa}$ with $\kappa$ finite, there exist  non-hyperfinite Borel equivalence relations which maintain their complexity on every  $\mathcal{I}(\mathcal{E}_{\infty})$-positive subset of $\mathcal{E}_{\infty}$.
Recall that if $E_1$ denotes the equivalence relation of eventual equality of sequences of reals, then $E_0\lneq_B E_1$.

\begin{proposition}\label{P:1}
$E_1$ is in the spectrum of $\mathcal{I}(\mathcal{E}_{\infty})$. More precisely, there exists a Borel equivalence relation on $\mathcal{E}_{\infty}$ whose restriction on every set of the form $[m,B]_{\infty}\subseteq\mathcal{E}_{\infty}$ is Borel bireducible with $E_1$. \end{proposition}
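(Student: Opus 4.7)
The plan is to take $E$ to be the pullback of $E_1$ under the Borel map $\Phi\colon \mathcal{E}_\infty\to (2^\mathbb{N})^\mathbb{N}$ which sends each partition $A\in \mathcal{E}_\infty$ to the sequence $(A_n)_{n\in \mathbb{N}}$ of its classes listed in increasing order of their minima. The resulting equivalence relation $E$ is Borel, and $\Phi$ itself witnesses $E\leq_B E_1$, so in particular $E\upharpoonright [m,B]_\infty\leq_B E_1$ on every cube. The bulk of the work will be producing, for each cube $[m,B]_\infty$, a Borel reduction in the opposite direction.

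To set this up, let $M$ be the number of classes of $B$ meeting $\{0,\ldots,m-1\}$ (assume $M\geq 1$; the $M=0$ case is analogous), and enumerate the classes of $B$ in min-order as $(B_j)_{j\in \mathbb{N}}$. The bijection $(n,k)\mapsto (2k+1)2^{n-1}$ from $\mathbb{N}_{\geq 1}\times \mathbb{N}$ onto $\mathbb{N}_{\geq 1}$ splits the $B$-indices $\{M,M+1,\ldots\}$ into ``columns'' $C_n := \{M-1+(2k+1)2^{n-1} : k\geq 0\}$, one for each $n\geq 1$. Each column has a distinguished ``marker index'' $M-1+2^{n-1}$ (the $k=0$ slot), while the remaining ``bit slots'' $M-1+(2k+1)2^{n-1}$ for $k\geq 1$ will carry the bits of a real. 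Given $x=(x_n)_{n\geq 1}\in (2^{\mathbb{N}_{\geq 1}})^{\mathbb{N}_{\geq 1}}$, define $\Psi(x)\in \mathcal{E}_\infty$ to be the coarsening of $B$ with classes $\Psi(x)_i := B_i$ for $0\leq i\leq M-2$; the ``encoded'' classes $\Psi(x)_{M-1+n} := B_{M-1+2^{n-1}} \cup \bigcup_{k\geq 1,\,x_n(k)=1} B_{M-1+(2k+1)2^{n-1}}$ for $n\geq 1$; and a single ``garbage'' class $\Psi(x)_{M-1} := B_{M-1} \cup \bigcup_{n,k\geq 1,\,x_n(k)=0} B_{M-1+(2k+1)2^{n-1}}$.

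Three facts then need verification. First, $\Psi(x)\in [m,B]_\infty$: by construction $\Psi(x)\leq B$ and $\Psi(x)$ has infinitely many classes (one per $n\geq 1$, non-empty thanks to its marker); agreement with $B$ on $\{0,\ldots,m-1\}$ holds because every $B$-class merged into $B_{M-1}$ has index $\geq M$ and is therefore disjoint from $\{0,\ldots,m-1\}$. Second, since $\min B_{M-1+2^{n-1}} < \min B_{M-1+(2k+1)2^{n-1}}$ for $k\geq 1$, one has $\min \Psi(x)_{M-1+n}=\min B_{M-1+2^{n-1}}$, a strictly increasing function of $n$; combined with $\min\Psi(x)_i=\min B_i$ for $i < M$, this shows the labelling $i\mapsto \Psi(x)_i$ agrees with the min-ordering of classes. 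Third, $\Psi(x)_{M-1+n}$ determines $x_n$ via the decoding $x_n(k)=1 \iff B_{M-1+(2k+1)2^{n-1}}\subseteq \Psi(x)_{M-1+n}$, so $\Psi$ is a Borel reduction: $\Psi(x)\mathrel{E}\Psi(y)$ iff $\Psi(x)_{M-1+n}=\Psi(y)_{M-1+n}$ for all large $n$, iff $x_n=y_n$ for all large $n$, iff $x\mathrel{E_1} y$.

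The main obstacle is the design of the encoding so that it respects the min-ordering of classes. A naive scheme that scatters the bits of each $x_n$ throughout $\mathbb{N}$ fails, because a tail perturbation of $x$ then spreads class changes across the entire min-ordered sequence $(\Psi(x)_i)_i$, breaking the reduction. The $2$-adic ``marker'' trick solves this by confining the information of each coordinate $x_n$ to the single class at position $M-1+n$ of the min-order, while all ``garbage'' bits are absorbed into the one class at the fixed initial position $M-1$, which is ignored by tail equivalence.
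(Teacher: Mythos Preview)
Your proof is correct and takes a genuinely different route from the paper's. The paper works with the \emph{tail} equivalence relation $E_1^{\mathrm{tail}}$ on $\mathcal{E}_\infty$ (allowing shifts in the index of the class sequence), whereas you use the straight pullback of $E_1$ under the class-listing map $\Phi$. This choice pays off on the upper bound: for you, $E\leq_B E_1$ is immediate via $\Phi$, while the paper must invoke that $E_1^{\mathrm{tail}}$ on $2^{\mathbb{N}\times\mathbb{N}}$ is hypersmooth (citing \cite{DJK} and \cite{KL}) to get $E_1^{\mathrm{tail}}\leq_B E_1$. For the lower bound, the two encodings are cousins but not identical: the paper partitions $\mathbb{N}$ into blocks $N_{-1},N_0,N_1,\ldots$, uses $N_{-1}$ to plant markers guaranteeing the min-order, and records each coordinate $x_n$ in a \emph{pair} of consecutive classes $A_{2n},A_{2n+1}$; you instead use a $2$-adic column decomposition with a single marker per column and a single fixed garbage class absorbing all zero bits, so that each $x_n$ lives in exactly one class at the fixed min-position $M-1+n$. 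The paper then transfers into cubes via the natural ``change of basis'' isomorphism $\mathcal{E}_\infty\cong[\emptyset,D]$ (which happens to preserve your $E$ as well) and declares the passage to general $[m,B]_\infty$ a straightforward consequence; you build the reduction directly into $[m,B]_\infty$, which is tidier. One cosmetic remark: your justification of agreement on $\{0,\ldots,m-1\}$ mentions only the garbage class, but the same reasoning (all merged $B$-classes have index $\geq M$) is needed for the encoded classes $\Psi(x)_{M-1+n}$ as well.
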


On the other hand, the content of our main theorem is that when it comes to CBERs, $\mathcal{E}_{\infty}$ shares the same canonization properties as $\mathcal{S}_{\infty}(2^{<\mathbb{N}})$ and  $\mathcal{E}_{\kappa}$ with $\kappa$ finite. For the statement below, recall that $\mathcal{D}\subseteq \mathcal{E}_{\infty}$ is \emph{Ramsey conull} if $\mathcal{E}_{\infty}\setminus \mathcal{D} \in \mathcal{I}(\mathcal{E}_{\infty})$.

\begin{theorem}\label{T:Main}
Let $E$ be a countable Borel equivalence relation on  $\mathcal{E}_{\infty}$. Then there exists a Ramsey conull $\mathcal{D}\subseteq \mathcal{E}_{\infty}$ so that for all $B\in\mathcal{D}$ and $A, A' \leq B$ we have that
\[A \; E \; A' \iff A = A'.\]
In particular,  $\mathcal{I}(\mathcal{E}_{\infty})$  canonizes every CBER to $(2^{\mathbb{N}},=)$. 
\end{theorem}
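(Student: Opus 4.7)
\emph{Step 1 (Lusin--Novikov decomposition).} Since $E$ is a CBER, Lusin--Novikov uniformization yields a countable family of Borel partial functions $f_n \colon \mathcal{E}_\infty \rightharpoonup \mathcal{E}_\infty$ with $f_n(A) \neq A$ on $\mathrm{dom}(f_n)$, such that for all $A, A' \in \mathcal{E}_\infty$, $A \mathrel{E} A'$ iff $A = A'$ or $A' = f_n(A)$ for some $n$. For each $n$, set
\[\mathcal{Y}_n := \bigl\{\,B \in \mathcal{E}_\infty : \forall A \leq B \; \bigl(A \notin \mathrm{dom}(f_n)\ \text{or}\ f_n(A) \not\leq B\bigr)\,\bigr\}.\]
Since $\mathcal{I}(\mathcal{E}_\infty)$ is a $\sigma$-ideal, it suffices to show each $\mathcal{Y}_n$ is Ramsey conull; the theorem then holds with $\mathcal{D} := \bigcap_n \mathcal{Y}_n$.

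\emph{Step 2 (Ellentuck dichotomy).} Fix $n$ and write $f = f_n$. The abstract Ellentuck theorem for the topological Ramsey space $\mathcal{E}_\infty$ grants the Ramsey property for every Borel subset, so it is enough to rule out the possibility that some Ellentuck cube $[m, B^*]_\infty$ is entirely disjoint from $\mathcal{Y}_f$. Under this assumption, every $C \in [m, B^*]_\infty$ admits some $A \leq C$ with $A \in \mathrm{dom}(f)$ and $f(A) \leq C$; a Jankov--von Neumann uniformization followed by an Ellentuck-Baire refinement then produces $B^{**} \in [m, B^*]_\infty$ and a continuous selector $\sigma \colon [m, B^{**}]_\infty \to \mathcal{E}_\infty$ such that $\sigma(C) \leq C$, $f \circ \sigma$ is continuous, $f(\sigma(C)) \leq C$, and $f(\sigma(C)) \neq \sigma(C)$ for every $C \in [m, B^{**}]_\infty$.

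\emph{Step 3 (Canonical form and contradiction).} The plan is now to pin down the two continuous coarsening-valued maps $\sigma$ and $f \circ \sigma$ into canonical forms by a fusion construction inside $[m, B^{**}]_\infty$. At each finite level $k$, the Carlson--Simpson dual Ramsey theorem, together with Doucha's canonization for $\mathcal{E}_k$ \cite{Doucha}, stabilizes the $k$-block approximations of $\sigma(C)$ and $f(\sigma(C))$ into fixed ``block-coarsening recipes'' applied to $C$. A fusion diagonalization compatible with the Ellentuck topology on $\mathcal{E}_\infty$ produces $B' \in [m, B^{**}]_\infty$ on which simultaneously $\sigma(C) = \pi \cdot C$ and $f(\sigma(C)) = \pi' \cdot C$ for fixed partitions $\pi, \pi'$ acting on the block structure of $C$. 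The identity $f(\pi \cdot C) = \pi' \cdot C$ must then hold for all $C \in [m, B']_\infty$; varying $C$ within a single $\pi$-fiber, the right-hand side must be independent of such variation, which forces $\pi = \pi'$ and hence $f(\sigma(C)) = \sigma(C)$, contradicting Step 2.

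\textbf{Main obstacle.} The technical heart is Step 3: executing a single fusion that yields \emph{synchronized} canonical forms for both $\sigma$ and $f \circ \sigma$ inside a Ramsey positive cube, while tracking the interaction between them. Level-wise canonization on each $\mathcal{E}_k$ is supplied by Doucha's theorem, but gluing these countably many canonizations coherently into a single cube in $\mathcal{E}_\infty$, without destroying Ellentuck positivity, and ruling out the possibility of a nontrivial $\pi \neq \pi'$ satisfying $f(\pi \cdot C) = \pi' \cdot C$ on a Ramsey conull set, is the genuinely new combinatorial ingredient demanded by the passage from the finite-block spaces $\mathcal{E}_k$ to the infinite-block space $\mathcal{E}_\infty$.
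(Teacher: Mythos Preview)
There are two gaps in Step~3, one easily patched and one not.

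The easy one: your fiber argument does not force $\pi=\pi'$. From $f(\pi\cdot C)=\pi'\cdot C$ on a cube, varying $C$ inside a $\pi$-fiber shows only that $\pi$-fibers refine $\pi'$-fibers, hence $\pi'\leq\pi$ and $f(\sigma(C))\leq\sigma(C)$; nothing prevents $\pi'\lneq\pi$. For instance, if $\sigma(C)=C$ and $f$ is ``merge the first two blocks of the input,'' then $\pi$ is the identity, $\pi'$ merges $\{0,1\}$, and all of your hypotheses in Step~2 hold with $f(\sigma(C))\neq\sigma(C)$. You need $f$ injective to reverse the inclusion, and Lusin--Novikov does not give that. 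The paper's fix is to use Feldman--Moore and take the $f_n$ to be Borel \emph{involutions}; then one gets both $f(A)\leq A$ and $A=f(f(A))\leq f(A)$.

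The hard one: you have not produced the simultaneous canonical forms $\sigma(C)=\pi\cdot C$ and $f(\sigma(C))=\pi'\cdot C$, and your proposed route---stabilize level by level via Doucha's $\mathcal{E}_k$ canonization and fuse---is precisely the strategy the paper warns does not go through. Proposition~\ref{P:1} ($E_1$ is in the spectrum of $\mathcal{I}(\mathcal{E}_\infty)$) is proved exactly to show that the product/mutual-generics canonization machinery of \cite{KSZ} and \cite{Doucha} fails for $\mathcal{E}_\infty$, so that a technique tailored to CBERs is required. Your plan names this as the ``main obstacle'' but supplies no such technique; the entire content of the theorem lives in this gap.

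The paper's argument is organized differently. It targets the weaker conclusion $f(A)\leq B\Rightarrow f(A)\leq A$ (Lemma~\ref{L:main}) and never produces a global canonical form for $\sigma$ or $f$. Instead it proves a \emph{traciality} lemma (Lemma~\ref{L:Tracial}): on a Ramsey conull set, $f(A)\upharpoonright\mu_n(B)$ depends only on $A\upharpoonright\mu_n(B)$. This reduces the problem at each level $n$ to a self-map $e$ of the finite lattice of partitions of $\{0,\dots,\mu_n(B)-1\}$, and a purely finitary probabilistic lemma (Lemma~\ref{L:CombGen}) shows that a \emph{random} equipartition $s$ satisfies ``$e(t)\leq t$ or $e(t)\nleq s$'' for every $t\leq s$. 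Fusing these choices of $s$ over all levels yields the desired $B$; the involution property then upgrades $f(A)\leq A$ to $f(A)=A$.
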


In contrast to all the previously known results  regarding the canonization properties of Ramsey--type ideals \cite{KSZ,Doucha,Mathias}, our arguments here are entirely classical/combinatorial and require no background in set-theoretic forcing. They also  differ in an essential way from earlier arguments for ``canonization to $=$'',
and not merely on the level of language.  Indeed, Proposition \ref{P:1} illustrates that Theorem \ref{T:Main} cannot be attained by the ``mutual generics" technique for (reduced) product forcings that was used in \cite[Theorem 8.1]{KSZ} and \cite[Theorem 0.1]{Doucha}; finer techniques designed specifically for CBERs are necessary. In the process of developing such techniques, we prove Lemma \ref{L:Tracial}, which is interesting in its own right since it seems that it could serve as the starting point for similar canonization results for other topological Ramsey spaces. As an example, in Section \ref{S:Last} we illustrate how our arguments adapt  to prove  \cite[Theorem 8.1]{KSZ} for the special case of CBERs.

\subsection{Some open problems} All topological Ramsey spaces that we know of canonize every CBER to either $(2^{\mathbb{N}},E_0)$ or $(2^{\mathbb{N}},=)$. This raises the following two natural questions:

\begin{problem}
Characterize all topological Ramsey spaces that canonize every CBER to $=$.
\end{problem}

\begin{problem}
Is there a topological Ramsey space $\mathcal{R}$ and some CBER $E$ on $\mathcal{R}$ so that:
\begin{enumerate}
    \item $E_0\lneq E$; and 
    \item $E$ is in the spectrum of $\mathcal{I}(\mathcal{R})$?
\end{enumerate}
\end{problem}

Recall that an equivalence relation $E$ on a  Polish space $X$ is hypersmooth if $E = \bigcup_{n\in\mathbb{N}}E_n$, where $E_n$ are smooth equivalence relations on $X$ and $E_n\subseteq E_{n+1}$ for all $n$. Another natural question that is raised by Proposition \ref{P:1} and the general theory of hypersmooth equivalence relations \cite{KL} is the following:

\begin{problem}
Let $E$ be an analytic equivalence relation on $\mathcal{E}_{\infty}$. Does $\mathcal{I}(\mathcal{E}_{\infty})$ canonizes $E$ to some hypersmooth equivalence relation?
\end{problem}

\section{Acknowledgments}
We would like to thank Andrew Marks for making us aware of  Mathias' results from \cite{Mathias}. The results we present here are just a byproduct of going down this rabbit hole. We are also grateful to M. Karamanlis, A.S. Kechris, N. Dobrinen, C. Conley, and J. Cummings whose genuine interest in this project motivated us to keep propelling down the rabbit hole.

The second author was supported by the National Science Foundation Graduate Research Fellowship Program under Grant Nos. DGE1745016 and DGE2140739, and by the ARCS Foundation. 

\section{Background}\label{Section:Def}

As usual, natural numbers $n\in\mathbb{N}$ are identified with the set $\{0,\ldots,n-1\}$ of their predecessors. If $R$ is a set of ordered pairs $(x,y)$ then $\mathrm{dom}(R)=\{x \colon \exists y  \; (x,y)\in R \}$.
Let $\mathcal{E}_{\infty}$ be the collection of all equivalence relations $E$ on $\mathbb{N}$ that have an infinite quotient $\mathbb{N}/E$. We will think of $E$ as a partition of $\mathbb{N}$ into infinitely many equivalence classes. Let  $\mu(E):=\{\mathrm{min}([x]_E) \colon x \in \mathbb{N} \}$ be the collection of all minimum representatives of the equivalence classes $[x]_E$ of $E$, and let $\{\mu_n(E)\colon n\in \mathbb{N}\}$ be the increasing enumeration of $\mu(E)$. Notice that $0=\mu_0(E)\in \mu (E)$ for all $E\in \mathcal{E}_{\infty}$. 

Let $A,B\in \mathcal{E}_{\infty}$. We say that $A$ is coarser than $B$ and write $A\leq B$ if every $A$-class is a union of $B$-classes. The $n$-th approximation $r_n(A)$ of $A\in\mathcal{E}_{\infty}$ is the equivalence relation $A\upharpoonright \mu_n(A)$ that is the restriction of $A$ to the finite set $\{0,\ldots,\mu_n(A)-1\}$. Notice that $r_0(A)=\emptyset$ for all $A\in\mathcal{E}_{\infty}$. We denote by  $\mathcal{E}_{\infty}^{\mathrm{fin}}$ the collection of all equivalence relations on all initial segments $k=\{0,\ldots,k-1\}$ of $\mathbb{N}$. Notice that  $\mathcal{E}_{\infty}^{\mathrm{fin}}=\{r_n(A)\colon A\in \mathcal{E}_{\infty}, n\in\mathbb{N}\}$. If $s,t\in \mathcal{E}_{\infty}^{\mathrm{fin}}$, then  we write $s\leq t$ if $\mathrm{dom}(s)=\mathrm{dom}(t)$ and $s$ is coarser than $t$, \textit{i.e.},  if every $s$-class is a union of $t$-classes. 
Below, the implicit assumption is that  $s,t,\ldots$ denote elements of $\mathcal{E}_{\infty}^{\mathrm{fin}}$ and $A,B,\ldots$ elements of $\mathcal{E}_{\infty}$. We say that $A$ is an {\bf end-extension} of $s$ and write $s\sqsubseteq A$ if there is some $n\in\mathbb{N}$ so that $s=r_n(A)$.

It follows by  \cite{CS} that the triple $(\mathcal{E}_{\infty},\leq ,r)$ is a topological Ramsey space in the sense of \cite[Section 5.1]{Ramsey}; see \cite[Theorem 5.70]{Ramsey}. For every $s$ and $A$  let 
\[[s,A]:=\{B\in \mathcal{E}_{\infty} \colon s \sqsubseteq B  \text{  and }  B\leq A \}\]
be the associated {\bf Ellentuck cube}.
The {\bf Polish topology} on $\mathcal{E}_{\infty}$ is the topology generated by all Ellentuck cubes of the form $[s,\mathbb{N}_{\mathrm{ER}}]$, where $s$ ranges over  elements of  $\mathcal{E}_{\infty}^{\mathrm{fin}}$ and $\mathbb{N}_{\mathrm{ER}}$ is the $\leq$-maximal element $\{(n,n)\colon n\in \mathbb{N}\}$ of $\mathcal{E}_{\infty}$. The {\bf Ellentuck topology} on $\mathcal{E}_{\infty}$  is the topology generated by all Ellentuck cubes. Recall from \cite{Ramsey}:

\begin{definition}
A set  $\mathcal{X} \subseteq  \mathcal{E}_{\infty}$ is {\bf Ramsey} if for every $[s,A]\neq \emptyset$  there exists $B\in[s,A]$  such that either $[s,B]\subseteq \mathcal{X}$ or $[s,B]\subseteq \mathcal{X}^c$. It is {\bf Ramsey null} if for every $[s,A]\neq \emptyset$  there exists $B\in [s,A] $  such that  $[s,B] \subseteq \mathcal{X}^c$.
\end{definition}

The following is proved in \cite{CS}; see also \cite{Ramsey}.

\begin{theorem}[Carlson-Simpson]\label{T:CS}
For every $\mathcal{X}\subseteq  \mathcal{E}_{\infty}$ we have that :
\begin{enumerate}
\item $\mathcal{X}$ has the Baire property in the Ellentuck topology if and only if $\mathcal{X}$ is Ramsey;
\item $\mathcal{X}$ is meager in the Ellentuck topology if and only if $\mathcal{X}$ is Ramsey null.
\end{enumerate}
\end{theorem}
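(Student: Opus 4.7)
The plan is to deduce both (1) and (2) simultaneously from the finitary pigeonhole proved in \cite{CS}, following the template of the abstract Ellentuck theorem in \cite[Section 5.1]{Ramsey}. The combinatorial input I would use is: for every $n\in\mathbb{N}$, every $A\in\mathcal{E}_{\infty}$, and every finite coloring of $\{r_n(B):B\leq A\}$, there exists $A'\leq A$ on which the coloring is constant. Everything else is organized around fusion sequences inside $\mathcal{E}_{\infty}$.

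\textbf{Open sets are Ramsey.} The first step is to show that every Ellentuck-open $\mathcal{X}$ is Ramsey. Given $[s,A]\neq\emptyset$, I would construct a fusion sequence $A\geq A_1\geq A_2\geq\cdots$ whose initial approximations $r_n(A_n)$ stabilize, and at stage $n$ apply the pigeonhole simultaneously on the finitely many cubes $[t,A_n]$, where $t$ ranges over the level-$n$ extensions of $s$ compatible with the stem fused so far. The coloring at each such $t$ is ``does $r_n(B)$ extend inside $[t,A_n]$ to an element of $\mathcal{X}$?''. Amalgamating the finitely many resulting $\leq$-refinements into a single $A_{n+1}$ is legitimate because $\leq$ is closed under amalgamation above a common initial stem. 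Taking the fusion $B$, openness of $\mathcal{X}$ forces either $[s,B]\subseteq\mathcal{X}$ or $[s,B]\cap\mathcal{X}=\emptyset$: each $C\in[s,B]$ commits to a color at every level, and a ``positive'' commitment at any level already witnesses $C\in\mathcal{X}$ by openness.

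\textbf{From open to BP.} The easy direction of (2) is that a Ramsey null set contains no nonempty Ellentuck-basic open set, so it is Ellentuck-nowhere dense, hence meager. Conversely, an Ellentuck-nowhere-dense $\mathcal{X}$ is Ramsey null: apply the open-case above to the Ellentuck-open set $\overline{\mathcal{X}}^c$, and exclude the alternative $[s,B]\subseteq\mathcal{X}$ by nowhere-denseness. For a countable union $\mathcal{X}=\bigcup_n\mathcal{X}_n$ of Ramsey-null sets I would run a diagonal fusion: use Ramsey nullness of $\mathcal{X}_n$ at stage $n$ to thin $A_n$ to $A_{n+1}$ so that $[t,A_{n+1}]\cap\mathcal{X}_n=\emptyset$ for every level-$n$ stem $t$ compatible with the fused initial segment, ensuring that the fusion $B$ satisfies $[s,B]\cap\mathcal{X}=\emptyset$. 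Part (1) then follows in both directions: any Ellentuck-BP set is of the form $\mathcal{U}\triangle\mathcal{M}$ with $\mathcal{U}$ open and $\mathcal{M}$ meager, and combining the two preceding paragraphs produces a single $B\in[s,A]$ that homogenizes both simultaneously; conversely, the definition of ``Ramsey'' directly expresses $\mathcal{X}$ as open-modulo-Ellentuck-meager.

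\textbf{Main obstacle.} The delicate ingredient is the amalgamation step within the fusion: one must simultaneously homogenize finitely many colorings living on distinct Ellentuck cubes $[t,A_n]$ while preserving the previously fixed initial segment $r_n(A_n)$. For $\mathcal{E}_{\infty}$ this is essentially the full strength of the Carlson--Simpson dual Ramsey theorem of \cite{CS}, which says that coarsenings of a single infinite partition can be aligned so as to realize arbitrary prescribed behavior on any given initial segment; abstractly it is the combination of axioms (A.3) and (A.4) of Todorčević's framework, whose verification for the triple $(\mathcal{E}_{\infty},\leq,r)$ is \cite[Theorem 5.70]{Ramsey}.
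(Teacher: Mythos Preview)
The paper does not prove Theorem~\ref{T:CS}; it quotes it as a black box from \cite{CS} (see also \cite[Theorem 5.70]{Ramsey}). Your sketch is precisely the standard abstract Ellentuck argument that those references carry out: verify axioms (A.1)--(A.4) for $(\mathcal{E}_{\infty},\leq,r)$---the hard one being (A.4), which is the dual Ramsey theorem of \cite{CS}---and then run the combinatorial-forcing/fusion machinery of \cite[Chapter 5]{Ramsey}. So your approach is correct and is exactly the route the cited literature takes.

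One small comment on presentation: in your ``open sets are Ramsey'' paragraph, the coloring you describe (``does $r_n(B)$ extend to an element of $\mathcal{X}$?'') is really the \emph{accepts/rejects} dichotomy in disguise, and the conclusion ``openness forces $[s,B]\subseteq\mathcal{X}$ or $[s,B]\cap\mathcal{X}=\emptyset$'' does not fall out of the fusion alone---one also needs the preliminary lemma that if $s$ rejects then after refinement every one-step extension of $s$ rejects (this is where (A.4) enters most directly). As written, your sketch slightly blurs this step, but you correctly flag it as the main obstacle in your final paragraph, so the overall outline is sound.
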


A subset $\mathcal{X}\subseteq \mathcal{E}_{\infty}$ is {\bf Borel} if it is Borel in the Polish topology of $\mathcal{E}_{\infty}$. Notice that this implies that $\mathcal{X}$ is Borel in the Ellentuck topology of $\mathcal{E}_{\infty}$ as well. It is not difficult to construct some  meager (in the Polish topology) $\mathcal{X}\subseteq \mathcal{E}_{\infty}$ which is Ramsey null and hence $\mathcal{X}^c$ is comeager but Ramsey null.   
\section{$E_1$ is in the spectrum of $\mathcal{E}_{\infty}$}

Consider the equivalence relations $E_1, E_{1}^{\mathrm{tail}}$ on $2^{\mathbb{N}\times \mathbb{N}}$ given by
\begin{align*}
x E_1 y &\iff \exists n_0\in\mathbb{N} \; \; \forall n, m \in\mathbb{N} \; \text{ so that }  x(n_0+n,m)= y(n_0+n,m), \\
x E_{1}^{\mathrm{tail}} y &\iff \exists n_x,n_y\in\mathbb{N} \; \; \forall n, m \in\mathbb{N} \; \text{ so that }  x(n_x+n,m)= y(n_y+n,m). 
\end{align*}
Consider also the $G_{\delta}$ subset $\mathcal{CS}\subseteq 2^{\mathbb{N}\times \mathbb{N}}$ consisting of all $x\in 2^{\mathbb{N}\times \mathbb{N}}$ such that the sets $A^x_0,A^x_1,A^x_2,\ldots$ given by $A^x_n:=\{m\in\mathbb{N}\colon x(n,m)=1\}$ are non-empty, form a partition of $\mathbb{N}$, and satisfy $\mathrm{min}(A_n^x) < \mathrm{min}(A_{n'}^x)$ whenever $n < n'$. Notice that $\mathcal{E}_{\infty}$ with its Polish topology is homeomorphic to $\mathcal{CS}$ under the map which sends $A\in\mathcal{E}_{\infty}$ to the unique $x\in\mathcal{CS}$ with $A^x_n:= \{m \in \mathbb{N} \colon m~A~\mu_n(A)\}$. That is, to the unique  $x\in\mathcal{CS}$ so that the associated sets $A^x_0,A^x_1,A^x_2,\ldots$ form a list of all equivalence classes of $A$ whose order in the list coincides with the order of their min elements. In what follows we will freely switch between the notation $\mathcal{CS}$ and  $\mathcal{E}_{\infty}$.

\begin{lemma}\label{L:E1_1}
The equivalence relations $(2^{\mathbb{N}\times\mathbb{N}},E_{1})$ and $(\mathcal{CS},E_{1}^{\mathrm{tail}})$ are Borel bireducible.
\end{lemma}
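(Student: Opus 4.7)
The plan is to construct explicit Borel reductions in both directions.

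For the direction $E_1 \leq_B E_{1}^{\mathrm{tail}}$, the strategy is to encode each $x\in 2^{\mathbb{N}\times\mathbb{N}}$ as a partition $f(x)\in\mathcal{CS}$ whose row-index is visibly recoverable from the partition itself. First, fix infinite pairwise disjoint subsets $I_0,I_1,\ldots$ partitioning $\mathbb{N}$ with two distinguished elements $\min(I_n)<s_n<\min(I_{n+1})$ (easily arranged via a triangular pairing $\mathbb{N}\leftrightarrow\mathbb{N}\times\mathbb{N}$). Inside each $I_n$ place two classes: $A_n$ contains $\min(I_n)$ and encodes the full row $x(n,\cdot)\in 2^{\mathbb{N}}$ via the elements of $I_n$ past $s_n$, while $B_n=I_n\setminus A_n$ contains $s_n$ to remain nonempty. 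Ordered by min, the classes of $f(x)$ are $A_0,B_0,A_1,B_1,\ldots$. In any shift-equality $\sigma^a f(x)=\sigma^b f(y)$, the two $0$-th rows are nonempty subsets of $I_{\lfloor a/2\rfloor}$, resp.\ $I_{\lfloor b/2\rfloor}$, so disjointness of the $I$'s forces $\lfloor a/2\rfloor=\lfloor b/2\rfloor$; and the landmark $\min(I_n)\in A_n$, $\min(I_n)\notin B_n$ matches the parities of $a,b$, hence $a=b$. Matching subsequent rows then recovers $x E_1 y$.

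For the reverse direction $E_{1}^{\mathrm{tail}}\leq_B E_1$, the plan is to reindex classes of $x$ by their minimum element. Define $h\colon\mathcal{CS}\to 2^{\mathbb{N}\times\mathbb{N}}$ by
\[
h(x)(i,m)=1 \iff i\in\mu(x) \text{ and } m \text{ lies in the same $x$-class as } i,
\]
so row $i$ of $h(x)$ is empty unless $i$ is a class-minimum of $x$, in which case it is the indicator of that class. If $x E_{1}^{\mathrm{tail}} y$ via $\sigma^{N_x}x=\sigma^{N_y}y$, then past the finitely many minima originating in the first $N_x$ classes of $x$ and the first $N_y$ classes of $y$, the sets $\mu(x)$ and $\mu(y)$ agree and mark the same tail classes, so $h(x)(i,\cdot)=h(y)(i,\cdot)$ for all large $i$, giving $h(x) E_1 h(y)$. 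Conversely, if $h(x)(i,\cdot)=h(y)(i,\cdot)$ for all $i\geq I$, then the $x$-classes with min above $I$ and the $y$-classes with min above $I$ coincide as families of subsets of $\mathbb{N}$, and since both are enumerated in increasing-min order, this equality lifts to aligned tails $A_{N_x+k}^x=A_{N_y+k}^y$ for all $k$, i.e., $x E_{1}^{\mathrm{tail}} y$.

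The main obstacle is the second direction: $E_{1}^{\mathrm{tail}}$ permits comparison under independent offsets $N_x,N_y$, so the reduction must absorb this extra ambiguity into the single initial-segment freedom of $E_1$. The key insight is that although no Borel selector for the $E_{1}^{\mathrm{tail}}$-class exists (such a selector would make $E_{1}^{\mathrm{tail}}$ smooth), the set of class-minima $\mu(x)\subseteq\mathbb{N}$ is an intrinsic, shift-insensitive labelling: tail-equivalent partitions have cofinitely many minima in common and those minima pick out the same classes, so reindexing rows by minima rather than by position converts tail equivalence into eventual equality. Continuity of both $f$ and $h$ is clear from the explicit definitions.
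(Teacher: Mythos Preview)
Your argument is correct. The direction $E_1\leq_B (\mathcal{CS},E_1^{\mathrm{tail}})$ is essentially the paper's construction: the paper also splits $\mathbb{N}$ into infinite pieces (their $N_{-1},N_0,N_1,\ldots$), encodes each row of $x$ as a pair of complementary classes inside one piece, and uses distinguished marker elements (their $t_{-1}(2n),t_{-1}(2n+1)$, your $\min(I_n),s_n$) to force any $E_1^{\mathrm{tail}}$-shift to have equal offsets. The details differ only cosmetically.

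The direction $(\mathcal{CS},E_1^{\mathrm{tail}})\leq_B E_1$ is where your approach genuinely diverges. The paper does not build a reduction by hand: it observes that the inclusion $\mathcal{CS}\hookrightarrow 2^{\mathbb{N}\times\mathbb{N}}$ reduces $E_1^{\mathrm{tail}}$ on $\mathcal{CS}$ to $E_1^{\mathrm{tail}}$ on $2^{\mathbb{N}\times\mathbb{N}}$, then quotes \cite{DJK} to say $E_1^{\mathrm{tail}}$ is hypersmooth and \cite{KL} to conclude $E_1^{\mathrm{tail}}\leq_B E_1$. Your reindex-by-minima map $h$ is an explicit, elementary substitute for this black box: because tail-equivalent partitions share all but finitely many class minima and those minima label identical classes, pushing the enumeration from ``position in the list'' to ``value of the minimum'' collapses the two independent shifts into a single cofinite agreement. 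This avoids the external citations entirely and yields a continuous (not merely Borel) reduction; the paper's route is shorter on the page but less self-contained.
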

\begin{proof}
Clearly the inclusion map $i\colon \mathcal{CS} \to 2^{\mathbb{N}\times\mathbb{N}}$ induces a continuous reduction
\[(\mathcal{CS},E_{1}^{\mathrm{tail}})\leq_B (2^{\mathbb{N}\times\mathbb{N}},E_{1}^{\mathrm{tail}})\] 
By \cite[Theorem 8.1]{DJK} we have that $(2^{\mathbb{N}\times \mathbb{N}},E_{1}^{\mathrm{tail}})$ is hypersmooth. In particular,  $(2^{\mathbb{N}\times \mathbb{N}},E_{1}^{\mathrm{tail}})\leq_B (2^{\mathbb{N}\times \mathbb{N}},E_1)$ in a strong way; see \cite[Proposition 1.3]{KL}. Composing these two  Borel reductions we have that $(\mathcal{CS},E_{1}^{\mathrm{tail}})\leq_B (2^{\mathbb{N}\times\mathbb{N}},E_{1})$.

For the other direction, we define a Borel reduction  $f\colon 2^{\mathbb{N}\times\mathbb{N}} \to \mathcal{CS}$ as follows. First let \[\mathbb{N} = N_{-1}\bigsqcup N_{0}\bigsqcup N_{1}\bigsqcup N_{2}\bigsqcup\cdots \]
be  a partition of $\mathbb{N}$ into infinitely many infinite sets and for every $k\geq -1$ fix some bijection $t_k\colon \mathbb{N} \to N_k$. Let now $x\in 2^{\mathbb{N}\times\mathbb{N}}$ and consider the sets $B_0,B_1,B_2,\ldots \subseteq \mathbb{N}$ given by $B_n:=\{m\in\mathbb{N}\colon x(n,m)=1\}$. For every $n\in\mathbb{N}$ we define the sets $A_{2n}, A_{2n+1}$ by 
\[A_{2n}:= t_{n}(B_n)\cup \{t_{-1}(2n)\} \quad \text{ and } \quad A_{2n+1}:= (N_{n} \setminus t_{n}(B_n))\cup \{t_{-1}(2n+1)\}\]
Notice that the sets $A_0,A_1,A_2,A_3,\ldots$  are non-empty and form a partition of $\mathbb{N}$. Moreover, by making sure that $t_{-1}\colon \mathbb{N}\to N_{-1}$ is increasing and that both $t_{-1}(2n), t_{-1}(2n+1)$ appear before the least element of $N_n$ we have that $A_0,A_1,\ldots$ is of the form $A^y_0,A^y_1,\ldots$ for some $y\in\mathcal{CS}$. Set $f(x)=y$ for this $y$.

It remains to check that $f$ is a reduction from $E_1$ to $E_{1}^{\mathrm{tail}}$. Let $x,x'\in 2^{\mathbb{N}\times\mathbb{N}}$. 
It is easy to see that if $x \; E_1 \; x'$ implies   $f(x) \; E_1 \; f(x')$, which implies  $f(x) \; E_{1}^{\mathrm{tail}} \; f(x')$.
The fact that $f(x) \; E_{1}^{\mathrm{tail}} \; f(x')$ implies $x \; E_1 \; x'$ follows from the injectivity of $t_{-1}$.

\end{proof}

\begin{lemma}\label{L:E1_2}
Let $D\in\mathcal{E}_{\infty}$. Then $E_{1}^{\mathrm{tail}}$ on $\mathcal{E}_{\infty}$ is Borel isomorphic to $E_{1}^{\mathrm{tail}}\upharpoonright [\emptyset,D]$.
\end{lemma}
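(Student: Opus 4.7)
The plan is to exhibit an explicit Borel isomorphism $\Phi\colon \mathcal{E}_{\infty}\to [\emptyset,D]$ that intertwines $E_1^{\mathrm{tail}}$ with itself. The key observation is that $[\emptyset,D]$ is canonically parameterized by partitions of the \emph{index set} of $D$-classes: an element $B\leq D$ is the same data as a partition of $\mathbb{N}$ telling you which $D$-classes get merged into a common $B$-class. Since the index set is $\mathbb{N}$, this yields a canonical copy of $\mathcal{E}_{\infty}$ inside $[\emptyset,D]$.

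Concretely, enumerate the $D$-classes as $[d_0]_D, [d_1]_D, \ldots$ with $d_n=\mu_n(D)$. Given $A\in\mathcal{E}_{\infty}$ with classes $A_0^A, A_1^A, \ldots$ (enumerated by least element), define $\Phi(A)\in\mathcal{E}_{\infty}$ to be the partition whose $k$-th class is
\[C_k \;:=\; \bigcup_{n \in A_k^A}[d_n]_D.\]
Equivalently, $x \mathrel{\Phi(A)} y$ iff the indices $n,m$ for which $x\in[d_n]_D$ and $y\in[d_m]_D$ satisfy $n \mathrel{A} m$. Since $n\mapsto d_n=\min([d_n]_D)$ is strictly increasing, the enumeration of the $C_k$ by least element is consistent with their enumeration by $k$, so indeed $\Phi(A)\in\mathcal{CS}$ and $\Phi(A)\leq D$, i.e.\ $\Phi(A)\in[\emptyset,D]$.

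Next I would verify that $\Phi$ is a Borel bijection $\mathcal{E}_{\infty}\to [\emptyset,D]$. For Borelness (in fact continuity), one just reads off $\Phi(A)(x,y)$ from finitely many coordinates of $A$ once $x,y$ are given. For bijectivity, the inverse sends $B\in[\emptyset,D]$ to the partition $A$ of $\mathbb{N}$ defined by $n \mathrel{A} m \iff d_n \mathrel{B} d_m$; since $B$ is coarser than $D$ and every $B$-class meets $\{d_n : n\in\mathbb{N}\}$ in a nonempty set, this recovers $B$ uniquely. Both maps are manifestly Borel.

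It remains to show that $\Phi$ and $\Phi^{-1}$ preserve $E_1^{\mathrm{tail}}$. This is immediate from the fact that the correspondence between classes of $A$ and classes of $\Phi(A)$ preserves the enumeration index: the $k$-th class of $\Phi(A)$ is $\bigcup_{n\in A_k^A}[d_n]_D$, which as a subset of $\mathbb{N}$ determines and is determined by $A_k^A$ (since $A_k^A=\{n : d_n\in C_k\}$). Therefore, for any two $A,A'\in\mathcal{E}_{\infty}$ and any shifts $n_A,n_{A'}\in\mathbb{N}$, we have
\[A_{n_A+k}^{A}=A_{n_{A'}+k}^{A'}\text{ for all }k\;\;\Longleftrightarrow\;\; C_{n_A+k}^{\Phi(A)}=C_{n_{A'}+k}^{\Phi(A')}\text{ for all }k,\]
which is exactly the statement that $\Phi$ preserves $E_1^{\mathrm{tail}}$ in both directions. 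I do not anticipate any real obstacle here; the only thing to be careful about is the bookkeeping that $\Phi(A)$ lies in $\mathcal{CS}$ (that its classes are enumerated by least element in the correct order), which follows from the monotonicity of $n\mapsto d_n$.
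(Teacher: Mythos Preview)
Your proposal is correct and follows essentially the same approach as the paper: the paper defines the identical map $f(A)=B$ where the $n$-th class of $B$ is $\bigcup_{k\in A_n}D_k$, and simply asserts that it is a Borel isomorphism respecting $E_1^{\mathrm{tail}}$. You have in fact supplied more verification detail (the explicit inverse, the monotonicity check for the class ordering, and the equivalence for the tail relation) than the paper's proof contains.
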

\begin{proof}
Let $D_0,D_1,D_2,\ldots$ be the list of all equivalence classes of $D$ in increasing order. Let $f\colon \mathcal{E}_{\infty} \to [\emptyset,D]$ be defined as follows: if $A\in \mathcal{E}_{\infty}$ and  $A_0,A_1,A_2,\ldots$
 is the list of all equivalence classes of $A$ in increasing order, then let $f(A)$ be the equivalence relation $B\leq D$ whose $n$-th equivalence class $B_n$ in the usual increasing order is the set $\bigcup_{k\in A_n}D_k$.

Then $f$ is a Borel isomorphism which respects $E_{1}^{\mathrm{tail}}$.
\end{proof}

Proposition \ref{P:1} is a straightforward consequence of Lemma \ref{L:E1_1} and Lemma \ref{L:E1_2}.

\section{Borel maps are tracial on a Ramsey conull set}\label{S:tracial}

Let $A,B\in \mathcal{E}_{\infty}$ and $n\in\mathbb{N}$. The {\bf trace of $A$ in $B$ at $n$}, denoted $\mathrm{tr}(A,B,n)$, is the restriction of the equivalence relation $A$ to the finite set $\{0,1,\ldots,\mu_n(B)-1\}$. Notice that  $\mathrm{tr}(A,B,n)$ may not be equal to $r_m(A)$ for any $m\in\mathbb{N}$.

\begin{definition}\label{D:Trace}
Let $f\colon \mathcal{E}_{\infty}\to \mathcal{E}_{\infty}$ be a map and  $\mathcal{X}\subseteq \mathcal{E}_{\infty}$. We say that $f$ is {\bf tracial on $\mathcal{X}$} if for every $B\in \mathcal{X}$ there is $n_0\in\mathbb{N}$ so that for all $n\geq n_0$ and $A,A'\leq B$ for which there exist $k\in\mathbb{N}$ and $s\in\mathcal{E}_{\infty}^{\mathrm{fin}}$ with $s=r_k(A)=r_k(A')$ and $\mathrm{dom}(s)=\{0,\ldots,\mu_n(B)-1\}$, we have
\begin{align}\label{Eq:Tracial}
 \mathrm{tr} \big(f(A),B,n\big) = \mathrm{tr}\big( f(A'),B,n\big).
\end{align}
\end{definition}

The main result of this section is the following lemma.

\begin{lemma}[Traciality]\label{L:Tracial}
Let  $f\colon \mathcal{E}_{\infty}\to \mathcal{E}_{\infty}$ be a Borel map. Then there exists a Ramsey conull set  $\mathcal{C}\subseteq  \mathcal{E}_{\infty}$ so that   $f$ is tracial on  $\mathcal{C}$.
\end{lemma}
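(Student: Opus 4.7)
The plan is to prove that $\mathcal{C} := \{B \in \mathcal{E}_{\infty} : f \text{ is tracial on } \{B\}\}$ is Ramsey conull. Unfolding the definition of Ramsey null set, this amounts to showing that for every nonempty Ellentuck cube $[s_0, A_0]$ there exists $B \in [s_0, A_0]$ with $[s_0, B] \subseteq \mathcal{C}$. The first reduction is to isolate a downward-closed strengthening of the tracial property. Writing $k_0$ for the number of classes of $s_0$, say that $B$ satisfies \textbf{(*)} if for every $m \geq k_0$ and every $s \in \mathcal{E}_{\infty}^{\mathrm{fin}}$ that is a coarsening of $r_m(B)$, the finite-valued Borel map $A \mapsto f(A) \upharpoonright \mu_m(B)$ is constant on the Ellentuck cube $[s, B]$. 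Since $B' \in [s_0, B]$ forces $\{\mu_n(B') : n \in \mathbb{N}\} \subseteq \{\mu_m(B) : m \in \mathbb{N}\}$ and $[s, B'] \subseteq [s, B]$ whenever both sides make sense, property (*) at $B$ implies the tracial property at every $B' \in [s_0, B]$. Hence it suffices to produce a single $B \in [s_0, A_0]$ satisfying (*).

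To construct such $B$, I would run a standard fusion. Inductively build a chain $A_0 \geq B_{k_0} \geq B_{k_0 + 1} \geq \cdots$ in $[s_0, A_0]$ together with stable finite approximations $s_m := r_m(B_m)$ satisfying $r_m(B_{m'}) = s_m$ for all $m' \geq m$. The limit $B$, defined as the unique partition with $r_m(B) = s_m$ for every $m \geq k_0$, lies in $[s_0, A_0]$ and is coarser than every $B_m$, so $[s, B] \subseteq [s, B_{m'}]$ whenever both sides are nonempty.

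The heart of the argument is the inductive stage: given $B_m$ with $s_m = r_m(B_m)$, produce $B_{m+1} \in [s_m, B_m]$ such that for each of the finitely many coarsenings $s^{(1)}, \ldots, s^{(\ell)}$ of $s_m$, the finite-valued Borel map $\tilde f_i(A) := f(A) \upharpoonright \mu_m(B_m)$ is constant on $[s^{(i)}, B_{m+1}]$. I would handle these coarsenings iteratively, starting from $D_0 := B_m$. At sub-stage $i$, apply Theorem~\ref{T:CS} (iterated over the finitely many values of $\tilde f_i$) to the Borel partition of $[s^{(i)}, D_{i-1}]$ determined by the trace map, producing $D'_i \in [s^{(i)}, D_{i-1}]$ on which $\tilde f_i$ is constant throughout $[s^{(i)}, D'_i]$. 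I then transport $D'_i$ to a partition $D_i \in [s_m, D_{i-1}]$ by retaining $s_m$ as the initial segment on $\{0, \ldots, \mu_m(B_m)-1\}$ but coarsening $D_{i-1}$ beyond $\mu_m(B_m)$ to match the partition induced by $D'_i$ on $\{\mu_m(B_m), \mu_m(B_m)+1, \ldots\}$. This is arranged so that the inclusion $[s^{(i)}, D_i] \subseteq [s^{(i)}, D'_i]$ holds and the constancy of $\tilde f_i$ is inherited, while $D_i \leq D_{i-1}$ preserves the constancies secured at earlier sub-stages. Setting $B_{m+1} := D_\ell$ and $s_{m+1} := r_{m+1}(B_{m+1})$ completes the stage.

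The main obstacle is to cleanly verify the transport step and to ensure that each $D_i$ retains infinitely many classes. The subtle point is that a leading class of $A \in [s^{(i)}, D_i]$ may absorb elements past $\mu_m(B_m)$, which must be consistent with how the leading classes of $D'_i$ extend past $\mu_m(B_m)$. I expect to navigate this by strengthening the Carlson-Simpson application at each sub-stage to produce a $D'_i$ whose leading classes (those comprising the initial segment $s^{(i)}$) are entirely contained in $\{0, \ldots, \mu_m(B_m)-1\}$, a natural Borel sub-cube which still admits the iterated Ramsey argument via a canonical choice of representative. Once this is in place, the fusion limit $B$ satisfies (*), and the lemma follows.
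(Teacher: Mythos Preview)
Your overall architecture is exactly the paper's: isolate a downward-closed strengthening of traciality, then run a fusion $B_{k_0}\geq B_{k_0+1}\geq\cdots$ where at stage $m$ you enumerate the finitely many coarsenings $s^{(i)}$ of $s_m$, apply the Carlson--Simpson theorem to stabilize the finite-valued trace map on each $[s^{(i)},\,\cdot\,]$, and then \emph{transport} the result back into the cube $[s_m,\,\cdot\,]$ before proceeding.  The paper carries this out in the abstract framework of topological Ramsey spaces and invokes Axiom~A.3.(2) of \cite[Section~5.1]{Ramsey} for the transport step: given $C\in[s^{(i)},D_{i-1}]$ with $\mathrm{depth}_{D_{i-1}}(s^{(i)})=m$, there exists $C'\in[r_m(D_{i-1}),D_{i-1}]$ with $[s^{(i)},C']\subseteq[s^{(i)},C]$.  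This is a verified axiom for $\mathcal{E}_\infty$ and gives exactly the $D_i$ you want, with no further work.

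Your proposed workaround for the transport step, however, does not go through as written.  You suggest restricting the Carlson--Simpson application to those $D'_i\in[s^{(i)},D_{i-1}]$ whose leading classes are entirely contained in $\{0,\ldots,\mu_m(B_m)-1\}$.  But $D'_i\leq D_{i-1}$ forces every $D'_i$-class to be a union of $D_{i-1}$-classes, and the first $m$ classes of $D_{i-1}$ (the ones meeting the initial segment) typically extend past $\mu_m(B_m)$; hence no $D'_i\in[s^{(i)},D_{i-1}]$ can have its leading classes confined to the initial segment, and your ``natural Borel sub-cube'' is empty.  The clean concrete construction (which is what A.3.(2) encodes for $\mathcal{E}_\infty$) goes the other way: take the $C$ that Carlson--Simpson hands you, and build $D_i$ by \emph{splitting} each of the first $k$ classes of $C$ back into its constituent $s_m$-pieces, attaching the tail of that $C$-class to the piece of smallest index.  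One then checks directly that $D_i\in[s_m,D_{i-1}]$ and that any $A\in[s^{(i)},D_i]$ is forced, by the condition $s^{(i)}\sqsubseteq A$, to re-merge those pieces and hence satisfy $A\leq C$.  Replace your final paragraph with either this explicit construction or a citation of A.3.(2), and the proof is complete.
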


The proof of Lemma \ref{L:Tracial} does not rely on any special properties of the Carlson-Simpson space $\mathcal{E}_{\infty}$. Indeed, for any topological Ramsey space $\mathcal{R}$ and any notion of ``trace" on $\mathcal{R}$, one can state and prove a lemma similar to Lemma \ref{L:Tracial}. Since there are other topological Ramsey spaces admitting useful notions of a trace we will restate Definition \ref{D:Trace} and prove Lemma \ref{L:Tracial} for any trace on the arbitrary topological Ramsey space $\mathcal{R}$.

Let $(\mathcal{R},\leq, r)$ be a topological Ramsey space in the sense of \cite[Section 5.1]{Ramsey}, and let $\mathcal{AR}$ be its set of finite approximations. A {\bf trace on $\mathcal{R}$} is any family $\mathcal{T}= \{\mathcal{T}_t\colon t\in\mathcal{AR}\}$ of finite Borel partitions $\mathcal{T}_t$  of $\mathcal{R}$, one for every  $t\in\mathcal{AR}$. Let $\mathcal{T}$ be a trace on $\mathcal{R}$. For all $A,B\in\mathcal{R}$ and $n\in\mathbb{N}$, we write  $\mathrm{tr}_{\mathcal{T}} \big(A,B,n\big)$ to denote the piece in the partition $\mathcal{T}_{r_n(B)}$ that contains $A$. 
\begin{example}
Let $\mathcal{T}$ be the trace on $\mathcal{E}_{\infty}$  that is defined as follows: for all $t\in\mathcal{E}_{\infty}^{\mathrm{fin}}$, we have that $A,A'$ are in the same pieces of the partition $\mathcal{T}_t$ if  $A\upharpoonright \mathrm{dom}(t)=A'\upharpoonright \mathrm{dom}(t)$.  It follows that  $\mathrm{tr}_{\mathcal{T}} \big(A,B,n\big)= \mathrm{tr}_{\mathcal{T}} \big(A',B,n\big)$ if and only if $\mathrm{tr} \big(A,B,n\big)= \mathrm{tr} \big(A',B,n\big)$, where the latter is defined at the beginning of this section.
\end{example}

\begin{example}\label{Example:Tree}
Let $2^{<\mathbb{N}}$ be the full binary tree. Recall that a strong subtree $A$ of $2^{<\mathbb{N}}$ is any subset of $2^{<\mathbb{N}}$ for which there are is a sequence $k_0<k_1<\ldots$ in $\mathbb{N}$ so that $A$ can be partitioned into sets $A(\ell) \subseteq 2^{k_{\ell}}$ satisfying
\begin{enumerate}
    \item $A(0)$ is a singleton (the root of $A$);
    \item for every $a\in A(\ell)$ and $i \in \{0, 1\}$, there is exactly one $b \in A(\ell+1)$ extending $a^\frown i$.
\end{enumerate}
We denote by $\mathcal{S}_{\infty}(2^{<\mathbb{N}})$ the Milliken space of all strong subtrees of $2^{<\mathbb{N}}$; see \cite[Section 6.1]{Ramsey}.
For every $A\in \mathcal{S}_{\infty}(2^{<\mathbb{N}})$, let $\langle A\rangle$ be the collection of all $b\in 2^{<\mathbb{N}}$ for which there is some $a\in A$ extending $b$. In particular, we always have that $\emptyset\in \langle A\rangle$.  Let $\mathcal{T}$ be the trace on $\mathcal{S}_{\infty}(2^{<\mathbb{N}})$   that is defined as follows: for all $t\in \mathcal{S}_{<\infty}(2^{<\mathbb{N}})$ with $t=T(0)\cup\cdots \cup T(\ell)$ for some $T\in \mathcal{S}_{\infty}(2^{<\mathbb{N}})$,
$A,A'\in \mathcal{S}_{\infty}(2^{<\mathbb{N}})$ are in the same piece of the partition $\mathcal{T}_t$ if  \[\langle A\rangle\cap T(\ell)= \langle A'\rangle\cap T(\ell).\]
\end{example}

For the rest of this section we fix some topological Ramsey space $\mathcal{R}$ together with some trace $\mathcal{T}$ on $\mathcal{R}$. Next we state analogues of Definition \ref{D:Trace} and Lemma \ref{L:Tracial} for the pair $\mathcal{R},\mathcal{T}$.

\begin{definition}\label{D:TraceAbstract}
Let $f\colon \mathcal{R}\to \mathcal{R}$ be a map and $\mathcal{X}\subseteq \mathcal{R}$. We say that $f$ is {\bf tracial on $\mathcal{X}$} if for every $B\in \mathcal{X}$ there is  $n_0\in\mathbb{N}$ so that for every $n\geq n_0$ and all  $A,A'\leq B$ for which there are $k\in\mathbb{N}$ and $s\in\mathcal{AR}$ with $s=r_k(A)=r_k(A')$ and $\mathrm{depth}_B(s)=n$, we have that
\begin{align}\label{Eq:AbstractTracial}
  \mathrm{tr}_{\mathcal{T}} \big(f(A),B,n\big) = \mathrm{tr}_{\mathcal{T}}\big( f(A'),B,n\big).
\end{align}
\end{definition}

Notice that for the topological Ramsey space $\mathcal{E}_{\infty}$, if $A\in \mathcal{E}_{\infty}$ and $s\in \mathcal{E}_{\infty}^{\mathrm{fin}}$, then we have that  $\mathrm{depth}_B(s)=n$ if and only if  $\mathrm{dom}(s)=\{0,\ldots,\mu_n(B)-1\}$. As a consequence, Definition \ref{D:Trace} is indeed a special case of Definition \ref{D:TraceAbstract}.

\begin{proposition}\label{Prop:Tracial}
Let  $f\colon \mathcal{R}\to \mathcal{R}$ be a Borel map. Then there exists a Ramsey conull set  $\mathcal{C}\subseteq  \mathcal{R}$ so that   $f$ is tracial on  $\mathcal{C}$.
\end{proposition}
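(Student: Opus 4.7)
The plan is to show the set $\mathcal{C}$ of $B \in \mathcal{R}$ on which $f$ is tracial is Ramsey conull by constructing, for every Ellentuck cube $[s_0, A_0] \neq \emptyset$, an element $B \in [s_0, A_0]$ with $[s_0, B] \subseteq \mathcal{C}$. Such a $B$ witnesses that $\mathcal{C}^c$ is Ramsey null, yielding the conclusion.

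I would construct $B$ as a fusion limit of a sequence $B_0 \geq B_1 \geq \cdots$ with $B_0 \in [s_0, A_0]$ and $r_n(B_{n+1}) = r_n(B_n)$ for each $n$; then $B$ satisfies $r_n(B) = r_n(B_n)$ for all $n$. At stage $n$, once $r_n(B) = r_n(B_n)$ has been committed, enumerate the finitely many pairs $(t, s) \in \mathcal{AR} \times \mathcal{AR}$ for which $t$ is a coarsening of $r_n(B_n)$ on the same domain and $s$ is a coarsening of $t$ on the same domain. For each such pair, the assignment $A \mapsto \text{``piece of } \mathcal{T}_t \text{ containing } f(A)\text{''}$ is a Borel coloring with finitely many values on $[s, B_n] = \{A \in \mathcal{R} : r_{|s|}(A) = s, \; A \leq B_n\}$. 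Parametrizing $[s, B_n]$ by the tails of its elements, which coarsen $B_n$'s tail beyond the committed level $n$, one obtains a finite Borel coloring on a Ramsey space naturally isomorphic to a restriction of $\mathcal{R}$; the abstract Ellentuck theorem yields a coarsening of that tail on which the coloring is constant. Iterating over the finitely many pairs $(t, s)$ produces $B_{n+1}$ with $r_n(B_{n+1}) = r_n(B_n)$ and simultaneous homogenization for all such pairs at stage $n$.

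To verify $[s_0, B] \subseteq \mathcal{C}$, take $B'' \in [s_0, B]$ together with $A, A' \leq B''$ satisfying $r_{|s|}(A) = r_{|s|}(A') = s$ and $\mathrm{depth}_{B''}(s) = n \geq |s_0|$. Since $\mu(B'') \subseteq \mu(B)$, the value $\mu_n(B'')$ equals $\mu_j(B)$ for some $j \geq n$, and hence $t := r_n(B'')$ is a coarsening of $r_j(B) = r_j(B_j)$; thus $(t, s)$ is one of the pairs handled at stage $j$. Both $A$ and $A'$ have $r_{|s|}(\cdot) = s$ and their tails coarsen $B^{>j} \leq B_{j+1}^{>j}$, so the stage-$j$ homogenization forces the pieces of $\mathcal{T}_t$ containing $f(A)$ and $f(A')$ to agree. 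Since $\mathcal{T}_{r_n(B'')} = \mathcal{T}_t$, this is exactly $\mathrm{tr}_{\mathcal{T}}(f(A), B'', n) = \mathrm{tr}_{\mathcal{T}}(f(A'), B'', n)$, so $B''$ is tracial with threshold $n_0 = |s_0|$.

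The main obstacle is the mismatch between the fusion constraint (preserving $r_n$ of the fusion limit) and the naive Ramsey homogenization on $[s, B_n]$: a coarsening $s$ of $r_n(B_n)$ is typically not an initial segment of $B_n$, so $[s, B_n]$ and $[r_n(B_n), B_n]$ are usually disjoint, and a sub-cube of $[s, B_n]$ would break the committed initial structure. The resolution is to perform the homogenization inside the tail of $B_n$ beyond the committed level, which is itself a topological Ramsey space of the same type, and to use that the trace partition $\mathcal{T}_t$ depends only on the already-committed $t$. A secondary subtlety, that for $B'' \in [s_0, B]$ the partition $\mathcal{T}_{r_n(B'')}$ depends on $B''$ rather than only on $B$, is preempted by handling \emph{all} coarsenings $t$ of $r_j(B_j)$ at each stage $j$, not only $t = r_j(B_j)$ itself.
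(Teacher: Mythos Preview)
Your fusion-and-homogenize scheme is the same as the paper's, but the two diverge at exactly the ``main obstacle'' you flag. The paper does not parametrize $[s,B_n]$ by tails; it stays at the level of an arbitrary topological Ramsey space and invokes Axiom~A.3 from \cite{Ramsey}. Since $\mathrm{depth}_{B_n}(s)=n$, Axiom~A.3(1) gives $[s,B_n]\neq\emptyset$, the abstract Ellentuck theorem yields $C\in[s,B_n]$ with the finite coloring $A\mapsto\mathrm{tr}_{\mathcal{T}}(f(A),B_n,n)$ constant on $[s,C]$, and Axiom~A.3(2) then produces $C'\in[r_n(B_n),B_n]$ with $[s,C']\subseteq[s,C]$. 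That last step is precisely what restores the committed stem while retaining homogeneity, and it is available in every topological Ramsey space. Your tail parametrization, by contrast, presupposes a self-similarity (``the tail beyond the committed level is itself a topological Ramsey space of the same type'') that an abstract $\mathcal{R}$ need not enjoy, so as written your argument does not cover the general statement of the Proposition.

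Your ``secondary subtlety'' and its fix---looping over all coarsenings $t$ of $r_n(B_n)$ and homogenizing each partition $\mathcal{T}_t$, rather than only $\mathcal{T}_{r_n(B_n)}$---is a genuine addition relative to the paper. The paper homogenizes only with respect to $\mathcal{T}_{r_n(D_n)}$ at stage $n$ and thereby produces a single tracial $D'\in[\tau,D]$, not a full sub-cube inside $\mathcal{C}$. For the downstream application (Lemma~\ref{L:main}) one tracial $B_0\in[s,D]$ is all that is ever used, so this suffices there; your extra loop buys the literal $[s_0,B]\subseteq\mathcal{C}$ conclusion constructively, at the harmless cost of finitely many more colorings per stage.
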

\begin{proof}
Let $D\in\mathcal{R}$ and $\tau\in \mathcal{AR}$ with  $[\tau, D]\neq \emptyset$. Without loss of generality, we may assume that $D\in [\tau, D]$. Set  $n_0:= \mathrm{depth}_{D}(\tau)$. We will find $D'\in [\tau, D]$ so that for every $n\geq n_0$ and all $A,A'\leq D'$ for which there are some $k\in\mathbb{N}$ and $s\in\mathcal{AR}$ with $s=r_k(A)=r_k(A')$ and $\mathrm{depth}_D(s)=n$, we have that $\mathrm{tr}_{\mathcal{T}} \big(f(A),D',n\big) = \mathrm{tr}_{\mathcal{T}}\big( f(A'),D',n\big)$.

To define $D'$ we  first build a sequence $(D_n\colon n\geq n_0)$ with
\begin{align}\label{Eq1}
D_{n+1}\in [ r_n(D_{n}), D_{n} ],   \text{ where } D_{n_0}=D,
\end{align}
so that for all  $A,A'\leq D_{n+1}$ for which there are some $k\in\mathbb{N}$ and $s\in\mathcal{AR}$ with $s=r_k(A)=r_k(A')$ and $\mathrm{depth}_{D_{n}}(s)=n$, we have that  
\begin{align}\label{Eq:2}
\mathrm{tr}_{\mathcal{T}} \big(f(A),D_n,n\big) = \mathrm{tr}_{\mathcal{T}}\big( f(A'),D_n,n\big).
\end{align}

Assuming we have defined $(D_n)_{n\geq n_0}$ as above, we let $D'$ be the  limit of $D_n$ in the Polish topology of $\mathcal{R}$. This limit exists by (\ref{Eq1}) above and  Axioms A.2 in \cite[Section 5.1]{Ramsey}. It
is the unique $D'$ given by  $r_n(D') = r_n(D_n) \text{ for all } n\geq n_0$.
Clearly  $D'\in [r_n(D_n), D_n]$   for all $n\geq n_0$. In particular, the case $n=n_0$ implies  $D'\in[\tau, D]$, while the general case $n\geq n_0$ implies 
that for all $n\geq n_0$  and all $A,A'\leq D'$ for which there are some $k\in\mathbb{N}$ and $s\in\mathcal{AR}$ with $s=r_k(A)=r_k(A')$ and $\mathrm{depth}_D(s)=n$, we have that $\mathrm{tr} \big(f(A),D',n\big) = \mathrm{tr}\big( f(A'),D',n\big)$. Indeed, let $n \geq n_0$ and $A,A'\leq D'$ so that there are some $k\in\mathbb{N}$ and $s\in\mathcal{AR}$ with $s=r_k(A)=r_k(A')$ and $\mathrm{depth}_D(s)=n$.
Since $r_n(D')=r_n(D_n)$ and $A,A'\leq D'\leq D_{n+1}$, by (\ref{Eq:2}) we have that
\[ \mathrm{tr}_{\mathcal{T}}(f(A),D_n,n) = \mathrm{tr}_{\mathcal{T}}(f(A'),D_n,n).
\]
Once again, since $r_n(D')=r_n(D_n)$, we have that $\mathrm{tr}_{\mathcal{T}}(f(A),D',n) = \mathrm{tr}_{\mathcal{T}}(f(A'),D',n).$

We  turn  now to the construction of $(D_n)_{n\geq n_0}$. For the base case, set $D_{n_0}:=D\in [\tau,D]$.  For the inductive step, assume that $D_n$ has been defined. We define $D_{n+1}$ as follows.

Set $r=r_n(D_n)$ and let $s_1,\ldots,s_N$ be an enumeration of all $s\in \mathcal{AR}$ with $\mathrm{depth}_{D_n}(s)=n$. Notice that for all such $s$ we have that $s\leq r$ and therefore, by Axiom A.2.(1) in \cite[Section 5.1]{Ramsey}, the collection of such $s$ is indeed finite.

We will define $D_{n+1}$ to be $C_N$, where $C_N\leq \ldots\leq C_0$ will be defined  by a second induction as follows.  Set $C_0:=D_n$.  For the inductive step assume  that $C_{i-1}$ has been defined for some $i\in \{1,\ldots,N\}$ with $C_{i-1}\in [r, D_n]$.  By Axiom A.3.(1) in \cite[Section 5.1]{Ramsey}, we have that $[s_i, C_{i-1}]\neq \emptyset$. Let $\gamma\colon [s_i, C_{i-1}]\to \mathcal{T}_r$ be given by
\[\gamma(A):=\mathrm{tr}_{\mathcal{T}}(f(A),D_n,n).
\]
Since $\gamma$ is Borel and $\mathcal{T}_r$ is finite, by  \cite[Corollary 5.12]{Ramsey} there is some $C \in  [s_i, C_{i-1}]$ so that $\gamma$ is constant on $[s_i, C]$. Since  $[s_i, C]\neq \emptyset$, there is some $C'\in [r, C_{i-1}]$ so that
\[\emptyset \neq [s_i, C'] \subseteq [s_i, C].\]
This follows by Axiom A.3.(2) in \cite[Section 5.1]{Ramsey}. We finish the inductive step by setting $C_{i}:=C'$ and noticing that $\gamma$ is constant on $[s_i, C_i]$. 

Set now $D_{n+1}:= C_N$. We have (\ref{Eq1}) since $C_N\leq C_0=D_n$ and $r= r_n(D_n)$. Moreover, consider any 
$A,A'\leq D_{n+1}$ for which there are some $k\in\mathbb{N}$ and $s\in\mathcal{AR}$ with $s=r_k(A)=r_k(A')$ and $\mathrm{depth}_{D_{n}}(s)=n$.
Then $s=s_i$ for some $i$. But then $A,A'\in [s_i, D_{n+1}]\subseteq  [s_i, C_{i}]$, and since  $\gamma$ is constant on $[s_i, C_{i}]$, we have
\[ \mathrm{tr}_{\mathcal{T}}(f(A),D_n,n) = \mathrm{tr}_{\mathcal{T}}(f(A'),D_n,n).\]
So (\ref{Eq:2}) holds as well.
\end{proof}

\section{A combinatorial lemma}

For any $n \in \mathbb{N} \setminus \{0\}$, let $\mathcal{Q}(n)$ denote the set of partitions of $n = \{0, 1, \ldots, n - 1\}$. We treat members of $\mathcal{Q}(n)$ as equivalence relations on $\{0, 1, \ldots, n - 1\}$ and let $\mathcal{Q}(n)$ inherit the relation $\leq$ from $\mathcal{E}_{\infty}^{\mathrm{fin}}$. For $m \leq n$, let
\begin{align*}
    \mathcal{Q}^m(n) := \{s \in \mathcal{Q}(n) \colon i < j < m \implies \neg \; i \; s \; j\}.
\end{align*}
In words, $\mathcal{Q}^m(n)$ denotes the set of partitions of $n$ such that $0, 1, \ldots, m - 1$ belong to different equivalence classes.
The following is the main result of this section. 

\begin{lemma}\label{L:CombGen}
For every $m,M \in \mathbb{N}$ there exists $M'\geq m,M$ such that the following holds: for every map $e \colon \mathcal{Q}(M') \to \mathcal{Q}(M')$, there exists $s \in \mathcal{Q}^m(M')$ such that for all $t \leq s$, either
\begin{align*}
    e(t) \leq t \; \mathrm{or} \; e(t) \nleq s.
\end{align*}
\end{lemma}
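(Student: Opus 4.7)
The plan is to prove Lemma~\ref{L:CombGen} by iteratively applying the Graham--Rothschild dual Ramsey theorem for partitions (the finite case of Carlson--Simpson, see \cite{CS} and \cite[Theorem 5.70]{Ramsey}): for all $k \leq \ell$ and every finite coloring of the $k$-block partitions of $\{0, \ldots, N-1\}$ with $r$ colors, for $N$ sufficiently large there exists a partition $P$ with $\ell$ blocks such that the collection of $k$-block partitions $t \leq P$ is monochromatic.

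The first step is a reformulation. Fix a target number of blocks $L \geq \max(m,1)$; it will suffice to produce $s \in \mathcal{Q}^m(M')$ with exactly $L$ blocks. Once $s$ is fixed, each coarsening $t \leq s$ is determined by a unique $\bar{t} \in \mathcal{Q}(L)$ (by merging $s$-blocks), and whenever $e(t) \leq s$ we obtain a unique $\overline{e(t)} \in \mathcal{Q}(L)$. The dichotomy in the conclusion then reads: for every $\bar{t} \in \mathcal{Q}(L)$, either $e(t) \not\leq s$, or $\overline{e(t)} \leq \bar{t}$.

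To construct such an $s$, I would enumerate $\mathcal{Q}(L) = \{\bar{u}_1, \ldots, \bar{u}_N\}$ (with $N = B(L)$ the Bell number) and build a decreasing chain $P_0 \geq P_1 \geq \cdots \geq P_N$ of partitions of $\{0, \ldots, M'-1\}$, each with at least $L$ blocks, starting from the discrete partition $P_0$. At step $i$, I would color the $k_i := |\bar{u}_i|$-block partitions $t$ coarsening $P_{i-1}$ by a finite invariant $c_i(t)$: a natural choice records whether $e(t) \leq t$ and, if so, the induced partition of the $t$-blocks by $e(t)$; otherwise it records the isomorphism type of the pair of partitions $(t, e(t))$. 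A single application of Graham--Rothschild then yields $P_i \leq P_{i-1}$ on which $c_i$ is monochromatic. Finally, $s$ is chosen as an $L$-block coarsening of $P_N$ satisfying the $\mathcal{Q}^m$ condition, which is possible once $M'$ (hence the number of blocks of $P_N$) is large enough. For each $\bar{u}_i$, the coarsening $t_i \leq s$ of shape $\bar{u}_i$ inherits by construction the monochromatic color $c_i(t_i)$.

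The main obstacle lies in the verification step. When the monochromatic color at step $i$ certifies $e(t) \leq t$ together with a specific induced partition, the dichotomy at $\bar{u}_i$ is immediate. The delicate case is when the color certifies $e(t) \not\leq t$ uniformly across all $k_i$-block coarsenings of $P_i$; there one must show $e(t_i) \not\leq s$ for our specific $s$. Concretely, one needs to locate a $t_i$-class $C$ that $e(t_i)$ splits into several pieces together with an $s$-class $S \subseteq C$ that meets two of those pieces. The challenge is to design $c_i$ so that its monochromatic value encodes enough of the splitting pattern of $e(t)$ on $t$-classes to force such an $S$ to exist, while keeping the color set finite enough for Graham--Rothschild to apply. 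I anticipate that this requires a two-tier coloring at each step: first pinning down the coarse isomorphism type of $(t, e(t))$, and then using a secondary application of Graham--Rothschild to arrange that the eventual $s$-refinement of each $t$-class is generic enough to transversally meet the $e(t)$-pieces. The bulk of the combinatorial work will lie in showing that these refinements can be carried out simultaneously for all $N$ enumerated shapes $\bar{u}_i$ without destroying one another.
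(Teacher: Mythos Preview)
Your approach differs substantially from the paper's. The paper proves the lemma by a counting argument with no Ramsey theory: it passes to the more specific Lemma~\ref{L:Comb}, restricting to equipartitions $s \in \mathcal{Q}_k^m(kN)$ (partitions into $k$ blocks each of size $N$), and shows via entropy bounds on binomial coefficients that for $N$ large the number of pairs $(s,t)$ with $t \leq s$, $e(t) \leq s$, and $e(t) \not\leq t$ is strictly smaller than $|\mathcal{Q}_k^m(kN)|$. Hence some $s$ admits no bad $t$.

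The obstacle you flag in the delicate case is a genuine gap, and I do not see how your iterative scheme closes it. The difficulty is a circularity in the color count. To apply Graham--Rothschild at stage $i$, the coloring $c_i$ must take values in a set whose size is fixed before $M'$ is chosen. But $e(t)$ is an arbitrary partition of $\{0,\ldots,M'-1\}$, and whether the eventual $s$ satisfies $e(t) \leq s$ depends on how $e(t)$ sits relative to the blocks of $P_N$ --- information that needs on the order of $B(K_{i-1})$ colors to record faithfully (where $K_{i-1}$ is the number of $P_{i-1}$-blocks), forcing $K_{i-2}$ to exceed the Graham--Rothschild bound for that many colors, and so on with no finite closure. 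Any invariant of $(t,e(t))$ that is bounded independently of $M'$ (the number of $e(t)$-blocks, which $t$-classes are split, the ``isomorphism type'' in any finitary sense) simply does not determine whether $e(t) \leq s$: the adversary may arrange that for every $k_i$-block $t$ the partition $e(t)$ refines $t$ by splitting exactly one $t$-class in two, so that $e(t) \not\leq t$ uniformly, while whether $e(t_i) \leq s$ still turns on whether that single cut happens to fall on an $s$-block boundary. Your proposed secondary Graham--Rothschild pass cannot help, since it too can only stabilize a bounded-range invariant of $t$, never the position of the cuts of $e(t_i)$ relative to a not-yet-chosen $s$.
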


For the proof of Lemma \ref{L:CombGen} we will rely on a probabilistic argument: we will show that the ``random'' $s \in \mathcal{Q}^m(M')$ satisfies the desired property with non-zero probability. To avoid dealing with expressions that involve Stirling numbers of the second kind in our counting arguments, we will endow $\mathcal{Q}^m(M')$ with the uniform measure supported only on ``equipartitions.'' This allows us to state and prove the following more specific result that implies Lemma \ref{L:CombGen}.

For $k, N \in \mathbb{N} \setminus \{0\}$, let $\mathcal{Q}_k(kN)$ denote the set of partitions of $kN$ into $k$ classes each having $N$ elements. For $m \leq k$, we let $\mathcal{Q}_k^m(kN) := \mathcal{Q}^m(kN) \cap \mathcal{Q}_k(kN)$.

\begin{lemma}\label{L:Comb}
Consider any $m \in \mathbb{N}$ and $k \in \mathbb{N} \setminus \{0\}$ with $m \leq k$. Then there exists $N' \in \mathbb{N}$ such that for all $N \geq N'$, the following holds: for every map $e \colon \mathcal{Q}(kN) \to \mathcal{Q}(kN)$, there exists $s \in \mathcal{Q}_k^m(kN)$ such that for all $t \leq s$, either 
\begin{align}\label{CombCondition}
    e(t) \leq t \; \mathrm{or} \; e(t) \nleq s.
\end{align}
\end{lemma}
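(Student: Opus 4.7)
The plan is to prove Lemma \ref{L:Comb} via a first-moment probabilistic argument: sample $s$ uniformly at random from $\mathcal{Q}_k^m(kN)$, and show that for $N$ sufficiently large, the probability that $s$ fails condition (\ref{CombCondition}) is strictly less than $1$, which yields a witness. The key reparameterization is that, after fixing the min-element ordering of the $k$ classes of $s$, the assignment $\pi \mapsto t_\pi^s$ (where $t_\pi^s$ is the coarsening of $s$ obtained by grouping $s$-classes according to $\pi$) is a bijection $\mathcal{Q}(k) \to \{t : t \leq s\}$, and $t_{\pi'}^s \leq t_\pi^s$ if and only if $\pi' \leq \pi$ in $\mathcal{Q}(k)$. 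Hence $s$ fails precisely when there exist $\pi, \pi' \in \mathcal{Q}(k)$ with $\pi' \nleq \pi$ such that $e(t_\pi^s) = t_{\pi'}^s$. Since the number of such pairs is at most $B(k)^2$ (with $B(k)$ the $k$-th Bell number, depending only on $k$), a union bound reduces the proof to the estimate
\[
\sup_{e} \Pr_s\bigl[e(t_\pi^s) = t_{\pi'}^s\bigr] \;\longrightarrow\; 0 \quad \text{as } N \to \infty,
\]
for each fixed pair $(\pi, \pi')$ with $\pi' \nleq \pi$.

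To establish this estimate, we condition on the value $\tau := t_\pi^s$. Modulo a bounded multiplicative correction from the $\mathcal{Q}_k^m$-constraint, the conditional distribution of $s$ given $\tau$ is an \emph{independent uniform refinement}: for each $\pi$-class $P$, the corresponding $\tau$-class $T_P$ (of size $N|P|$) is refined uniformly at random into $|P|$ unordered sub-classes of size $N$, independently across different $P$. The hypothesis $\pi' \nleq \pi$ guarantees the existence of a $\pi$-class $P^*$ meeting at least two distinct $\pi'$-classes $Q_1, \ldots, Q_r$ (with $r \geq 2$). Forcing $t_{\pi'}^s = \rho := e(\tau)$ imposes a rigid constraint on the refinement of $T_{P^*}$: the $|P^*|$ sub-classes must split along the groups $P^* \cap Q_1, \ldots, P^* \cap Q_r$ into unions matching prescribed subsets of $T_{P^*}$. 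A direct count bounds the conditional probability of this event by
\[
\binom{|P^*|}{a_1, \ldots, a_r} \Big/ \binom{N|P^*|}{N a_1, \ldots, N a_r},
\]
where $a_i = |P^* \cap Q_i|$. Since $r \geq 2$ and the $a_i$ sum to $|P^*|$, the right-hand side decays exponentially in $N$, uniformly in $\tau$ and $\rho$.

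Combining the above, $\Pr[s \text{ fails}] \leq B(k)^2 \cdot \exp(-\Omega_{m,k}(N))$, so taking $N' = N'(m,k)$ sufficiently large produces a good $s \in \mathcal{Q}_k^m(kN)$, proving the lemma. The main obstacle will be handling the bounded-correction step involving the $\mathcal{Q}_k^m$-constraint: conditioning on the event that $0, 1, \ldots, m-1$ lie in distinct $s$-classes must not destroy the independent-refinement description of the conditional distribution. This is manageable because the probability of this event, even conditioned on $t_\pi^s = \tau$, is bounded below by a positive constant (depending on $m$ and $k$ but not on $N$), so it merely multiplies the exponentially small estimate by a constant factor that is harmlessly absorbed for large $N$.
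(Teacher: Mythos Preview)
Your plan is correct and shares the paper's first–moment skeleton, but the two arguments are organized differently. The paper does not reparameterize by pairs $(\pi,\pi')\in\mathcal{Q}(k)^2$; instead it sums over $t$ directly and uses the clean lattice identity that, for any $t$ with $e(t)\nleq t$, the pair $(s,t)$ is bad if and only if the meet $h_t^e:=t\wedge e(t)$ satisfies $h_t^e\leq s$. This turns the whole problem into comparing the explicit counts $|\{s\in\mathcal{Q}_k^m(kN):h_t^e\leq s\}|$ and $|\{s\in\mathcal{Q}_k^m(kN):t\leq s\}|$, which are computed in closed form and bounded via the entropy inequality for binomial coefficients. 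Your route, by contrast, makes the mechanism ``one $\pi$-block $P^*$ already forces an exponentially unlikely multinomial event'' completely transparent, at the cost of carrying the type data $(\pi,\pi')$ through the conditioning.

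One point that deserves the same caveat you already give for the $\mathcal{Q}_k^m$-constraint: your assertion that ``the conditional distribution of $s$ given $\tau$ is an independent uniform refinement'' is not literally true. Conditioning on $t_\pi^s=\tau$ with $\pi$ fixed does more than force $s$ to refine $\tau$ with the right block sizes; it also pins down the \emph{min-ordering type} of the refinement (i.e.\ which global min-ranks land in which $\tau$-class), and that constraint couples the blocks. This is again only a bounded multiplicative correction (at most a factor depending on $k$), so your exponential bound survives, but it is a second place where the ``modulo bounded correction'' clause is needed. The paper's $h_t^e$ formulation avoids this wrinkle entirely because it never fixes a labeling of the $s$-classes.
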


It is clear that Lemma \ref{L:CombGen} follows from Lemma \ref{L:Comb}.

\begin{proof}[Proof of Lemma \ref{L:Comb}.]
We first prove a claim to simplify our later calculations.

\begin{claim}
For all $k \in \mathbb{N} \setminus \{0\}$, there exists $M \in \mathbb{N}$ such that for all $N \geq M$, if $a_1, a_2, b_1, b_2 \in \mathbb{N}$ satisfy $0 \leq b_i \leq a_i \leq k$ and $a_i \neq 0$ for $i \in \{1, 2\}$, we have
\begin{align}\label{combine}
    \frac{(a_1 N - b_1)!}{(a_1 - b_1)!} \cdot \frac{(a_2 N - b_2)!}{(a_2 - b_2)!} \leq \frac{((a_1 + a_2) N - (b_1 + b_2))!}{((a_1 + a_2) - (b_1 + b_2))!}.
\end{align}
\end{claim}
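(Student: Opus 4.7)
The plan is to prove (\ref{combine}) by exhibiting an explicit injection between two sets of injective functions, which makes the inequality into a tautology valid for every $N\geq 1$ (so in particular one may take $M=1$, and no asymptotics are actually needed).

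First I would rewrite each factor on the left-hand side as a falling factorial: setting $n:=aN-b$ and $r:=a(N-1)$, one has $n-r=(aN-b)-a(N-1)=a-b$, and therefore
\[ \frac{(aN-b)!}{(a-b)!} \;=\; \frac{n!}{(n-r)!} \;=\; \bigl|\mathrm{Inj}\bigl([a(N-1)],\,[aN-b]\bigr)\bigr|, \]
where $\mathrm{Inj}(X,Y)$ denotes the set of injections $X\hookrightarrow Y$ and $[m]$ abbreviates $\{0,1,\ldots,m-1\}$. Applying the same identity to $(a_1+a_2,\, b_1+b_2)$, the right-hand side counts injections from a set of size $(a_1+a_2)(N-1)$ into a set of size $(a_1+a_2)N-(b_1+b_2)$. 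Thus the LHS counts pairs $(f_1,f_2)$ with $f_i\colon [a_i(N-1)]\hookrightarrow[a_iN-b_i]$, while the RHS counts single injections between the corresponding disjoint unions.

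For the comparison itself, I would identify $[(a_1+a_2)(N-1)]$ with $[a_1(N-1)]\sqcup[a_2(N-1)]$ and $[(a_1+a_2)N-(b_1+b_2)]$ with $[a_1N-b_1]\sqcup[a_2N-b_2]$ via any fixed bijections, which is legitimate since each pair of sets has matching total cardinality. The assignment $(f_1,f_2)\longmapsto f_1\sqcup f_2$ is then an injection from the LHS-set into the RHS-set, because one recovers each $f_i$ as the restriction of $f_1\sqcup f_2$ to the $i$-th summand. This yields (\ref{combine}).

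I do not foresee any real obstacle. The only edge case worth mentioning is when $a_i=b_i$, in which case the denominator $(a_i-b_i)!=0!$ equals $1$ and the corresponding injection count becomes $(a_i(N-1))!$, namely the number of bijections on an $a_i(N-1)$-element set, so the interpretation is unaffected. In particular no Stirling-type estimate and no largeness of $N$ is required, even though the claim is stated in that form for later convenience.
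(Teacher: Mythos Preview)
Your argument is correct and in fact sharper than what the claim asks: the injection $(f_1,f_2)\mapsto f_1\sqcup f_2$ between $\mathrm{Inj}([a_1(N-1)],[a_1N-b_1])\times\mathrm{Inj}([a_2(N-1)],[a_2N-b_2])$ and $\mathrm{Inj}([(a_1+a_2)(N-1)],[(a_1+a_2)N-(b_1+b_2)])$ is well-defined precisely because the two codomain blocks are disjoint after the identification, and it is injective for the reason you give. This yields (\ref{combine}) for every $N\geq 1$, so $M=1$ works.

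The paper's route is genuinely different. There the inequality is recast as the statement that the ratio
\[
\binom{(a_1+a_2)-(b_1+b_2)}{a_1-b_1}\Big/\binom{(a_1+a_2)N-(b_1+b_2)}{a_1N-b_1}
\]
is at most $1$, and this is established asymptotically via the entropy bound $\frac{1}{b+1}2^{bH(a/b)}\leq\binom{b}{a}\leq 2^{bH(a/b)}$, showing the ratio in fact tends to $0$ as $N\to\infty$. Your combinatorial injection avoids all of this and gives a uniform $M$; the paper's analytic approach is heavier for the claim itself, but has the incidental benefit that the same entropy estimate is reused verbatim a few lines later to force the quantities $r_t^e$ below $1/|\mathcal{Q}(k)|$, which is the step where a genuine asymptotic (not merely $\leq 1$) is needed. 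So your argument is a strict improvement for the claim, while the paper's proof doubles as a warm-up for the subsequent bound.
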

\begin{proof}[Proof of Claim.]
Observe that (\ref{combine}) holds for all $N > 0$ if $a_1 = b_1$ and $a_2 = b_2$. Also note that there are finitely many choices of $a_1, a_2, b_1,$ and $b_2$ satisfying the hypotheses. So it suffices to show that
\begin{align*}
    R_{a_1, a_2, b_1, b_2} := \lim_{N \to \infty} \frac{(a_1 N - b_1)!}{(a_1 - b_1)!} \cdot \frac{(a_2 N - b_2)!}{(a_2 - b_2)!} \cdot \frac{((a_1 + a_2) - (b_1 + b_2))!}{((a_1 + a_2) N - (b_1 + b_2))!} < 1
\end{align*}
for all $a_1, a_2, b_1, b_2 \in \mathbb{N}$ such that $0 < a_1, a_2 \leq k$, $b_1 < a_1$, and $b_2 \leq a_2$.

The following bound is a consequence of Stirling's approximation formula: for any $a \in \mathbb{N}$ and $b \in \mathbb{N} \setminus \{0\}$ with $a \leq b$, we have
\begin{align}\label{entropyBound}
    \frac{1}{b + 1} 2^{b H(a / b)} \leq \binom{b}{a} \leq 2^{b H(a / b)},
\end{align}
where $H$ denotes the binary entropy function
\begin{align*}
    H \bigg( \frac{a}{b} \bigg) = - \frac{a}{b} \log_2 \frac{a}{b} - \bigg( 1 - \frac{a}{b} \bigg) \log_2 \bigg( 1 - \frac{a}{b} \bigg).
\end{align*}
Alternatively, this bound can be derived directly as in \cite[Example 11.1.3]{CoverThomas}. Using (\ref{entropyBound}), we obtain
\begin{align*}
    R_{a_1, a_2, b_1, b_2} & = \lim_{N \to \infty} \frac{(a_1 N - b_1)!}{(a_1 - b_1)!} \cdot \frac{(a_2 N - b_2)!}{(a_2 - b_2)!} \cdot \frac{((a_1 + a_2) - (b_1 + b_2))!}{((a_1 + a_2) N - (b_1 + b_2))!} \\
    & = \lim_{N \to \infty} \frac{\binom{(a_1 + a_2) - (b_1 + b_2)}{a_1 - b_1}}{\binom{(a_1 + a_2) N - (b_1 + b_2)}{a_1 N - b_1}} \\
    & \leq 2^{((a_1 + a_2) - (b_1 + b_2)) H(\frac{a_1 - b_1}{(a_1 + a_2) - (b_1 + b_2)})} \lim_{N \to \infty} \frac{(a_1 + a_2) N - (b_1 + b_2) + 1}{2^{((a_1 + a_2)N - (b_1 + b_2)) H(\frac{a_1 N - b_1}{(a_1 + a_2) N - (b_1 + b_2)})}}.
\end{align*}
Since $a_1, a_2 > 0$, note that $\lim_{N \to \infty} H(\frac{a_1 N - b_1}{(a_1 + a_2) N - (b_1 + b_2)}) = H(\frac{a_1}{a_1 + a_2}) > 0$. Because $\lim_{N \to \infty} \frac{c_1 N + d_1}{2^{c_2 N + d_2}} = 0$ in general for $c_2 > 0$, we have $R_{a_1, a_2, b_1, b_2} = 0$. In particular, $R_{a_1, a_2, b_1, b_2} < 1$, as desired.
\end{proof}

Fix $M$ as in the claim. Consider any $N \geq M$ and any map $e \colon \mathcal{Q}(kN) \to \mathcal{Q}(kN)$. Let $B_e$ consist of all pairs $(s, t) \in \mathcal{Q}_k^m(kN) \times \mathcal{Q}(kN)$ such that $t \leq s$ and (\ref{CombCondition}) fails. It suffices to find $N$ such that $|B_e| < |\mathcal{Q}_k^m(kN)|$ for every map $e \colon \mathcal{Q}(kN) \to \mathcal{Q}(kN)$.

Consider any $t \in \mathcal{Q}(kN)$, and let $h_t^e := t \wedge e(t)$, the partition of $kN$ such that
\begin{align*}
    i \; h_t^e \; j \iff i \; t \; j \text{ and } i \; e(t) \; j.
\end{align*}
Note that $t \leq h_t^e$ and $e(t) \leq h_t^e$. Fix any $t$ for which there exists $s \in \mathcal{Q}_k^m(kN)$ such that $(s, t) \in B_e$. Then observe that $t \lneq h_t^e$. Hence for such $t$, for any $s \in \mathcal{Q}_k^m(kN)$,
\begin{align}\label{bhrelation}
    (s, t) \in B_e \iff \big( t \leq s \text{ and } e(t) \leq s \big) \iff h_t^e \leq s.
\end{align}
Let
\begin{align*}
    r_t^e := \frac{|\{s \in \mathcal{Q}_k^m(kN) \colon h_t^e \leq s\}|}{|\{s \in \mathcal{Q}_k^m(kN) \colon t \leq s\}|}.
\end{align*}
We will bound $|B_e|$ by bounding $r_t^e$ for all such $t$ and using (\ref{bhrelation}).

Write $t = \{X_1, X_2, \ldots X_u\}$. Since $t$ is a coarsening of some $s \in \mathcal{Q}_k^m(kN)$, we can find positive $k_1, k_2, \ldots, k_u \in \mathbb{N}$ such that $|X_i| = k_i N$ for each $1 \leq i \leq u$. For each $i$, let $m_i := |X_i \cap m|$. Observe that $\sum_{1 \leq i \leq u} k_i = k$, $\sum_{1 \leq i \leq u} m_i = m$, and $m_i \leq k_i$ for each $i$. Now write $h_t^e = \{X_{i, j} \colon 1 \leq i \leq u, 1 \leq j \leq \ell_i\}$ such that for each $i$, $\bigcup_{1 \leq j \leq \ell_i} X_{i, j} = X_i$. Since we are assuming there exists $s \in \mathcal{Q}_k^m(kN)$ with $(s, t) \in B_e$, we know from (\ref{bhrelation}) that $h_t^e \leq s$ for some $s \in \mathcal{Q}_k^m(kN)$. So for each $1 \leq i \leq u$ and $1 \leq j \leq \ell_i$, we can write $|X_{i, j}| = k_{i, j} N$. Let $m_{i, j} = |X_{i, j} \cap m|$. Note that for each $i$, $\sum_{1 \leq j \leq \ell_i} k_{i, j} = k_i$ and $\sum_{1 \leq j \leq \ell_i} m_{i, j} = m_i$. Also, $m_{i, j} \leq k_{i, j}$ for all $i, j$ since $h_t^e \leq s$ for some $s \in \mathcal{Q}_k^m(kN)$.

We now bound $r_t^e$. Evaluating the denominator, we have
\begin{align*}
    |\{s \in \mathcal{Q}_k^m(kN) \colon t \leq s\}| & = \prod_{1 \leq i \leq u} \frac{(k_i N - m_i)!}{(N - 1)!^{m_i} \cdot N!^{k_i - m_i} \cdot (k_i - m_i)!} \\
    & = \frac{1}{(N - 1)!^m \cdot N!^{k - m}} \prod_{1 \leq i \leq u} \frac{(k_i N - m_i)!}{(k_i - m_i)!}.
\end{align*}
Similarly, evaluating the numerator of $r_t^e$ yields
\begin{align*}
    |\{s \in \mathcal{Q}_k^m(kN) \colon h_t^e \leq s\}| & = \frac{1}{(N - 1)!^m \cdot N!^{k - m}} \prod_{1 \leq i \leq u} \prod_{1 \leq j \leq \ell_i} \frac{(k_{i, j} N - m_{i, j})!}{(k_{i, j} - m_{i, j})!}.
\end{align*}
Since $N \geq M$, we can apply the claim to obtain
\begin{align*}
    |\{s \in \mathcal{Q}_k^m(kN) \colon h_t^e \leq s\}| & \leq \frac{1}{(N - 1)!^m \cdot N!^{k - m}} \prod_{1 \leq i \leq u} \frac{(k_{i, 1} N - m_{i, 1})!}{(k_{i, 1} - m_{i, 1})!} \cdot \frac{((k_i - k_{i, 1}) N - (m_i - m_{i, 1}))!}{((k_i - k_{i, 1}) - (m_i - m_{i, 1}))!}.
\end{align*}
So we have
\begin{align*}
    r_t^e & \leq \prod_{1 \leq i \leq u} \frac{\binom{k_i - m_i}{k_{i, 1} - m_{i, 1}}}{\binom{k_i N - m_i}{k_{i, 1} N - m_{i, 1}}}.
\end{align*}

Observe that if $k_{i, 1} = k_i$, then $m_{i, 1} = m_i$ and $\frac{\binom{k_i - m_i}{k_{i, 1} - m_{i, 1}}}{\binom{k_i N - m_i}{k_{i, 1} N - m_{i, 1}}} = 1$. Since $t \lneq h_t^e$, there must be some $1 \leq i \leq u$ for which $k_{i, 1} < k_i$. So
\begin{align}
    r_t^e & \leq \prod_{\substack{1 \leq i \leq u \\ k_{i, 1} < k_i}} \frac{\binom{k_i - m_i}{k_{i, 1} - m_{i, 1}}}{\binom{k_i N - m_i}{k_{i, 1} N - m_{i, 1}}} \nonumber \\
    & \leq \prod_{\substack{1 \leq i \leq u \\ k_{i, 1} < k_i}} \frac{2^{(k_i - m_i) H(\frac{k_{i, 1} - m_{i, 1}}{k_i - m_i})}}{\frac{1}{k_i N - m_i + 1} 2^{(k_i N - m_i) H(\frac{k_{i, 1} N - m_{i, 1}}{k_i N - m_i})}}, \label{combRatio}
\end{align}
where the last inequality follows by (\ref{entropyBound}). Note that (\ref{combRatio}) depends only on the values of $k_i, m_i, k_{i, 1}$, and $m_{i, 1}$ for each $1 \leq i \leq u$. Note that there are finitely many ways to choose these values, letting $u$ vary, such that the following necessary conditions hold:
\begin{enumerate}
    \item $0 < k_i \leq k$ for each $i$;
    \item $\sum_i k_i = k$;
    \item $0 \leq m_i \leq k_i$ for each $i$;
    \item $\sum_i m_i = m$;
    \item $0 < k_{i, 1} \leq k_i$ for each $i$;
    \item $0 \leq m_{i, 1} \leq \min{\{k_{i, 1}, m_i\}}$ for each $i$;
    \item $k_{i, 1} < k_i$ for some $i$.
\end{enumerate}
For each such choice of these values, the limit of (\ref{combRatio}) as $N \to \infty$ is 0. Thus, we can fix $N' \in \mathbb{N}$ such that for all $N \geq N'$ and each choice of values for $k_i, m_i, k_{i, 1},$ and $m_{i, 1}$ satisfying the above conditions, (\ref{combRatio}) is less than $\frac{1}{|\mathcal{Q}(k)|}$.

Now consider any $N \geq N'$ and any $e \colon \mathcal{Q}(kN) \to \mathcal{Q}(kN)$. Then
\begin{align*}
    \frac{|B_e|}{|\{(s, t) \in \mathcal{Q}_k^m(kN) \times \mathcal{Q}(kN) \colon t \leq s \}|} & = \frac{\sum_{t \in \mathcal{Q}(kN)} |\{s \in \mathcal{Q}_k^m(kN) \colon (s, t) \in B_e\}|}{\sum_{t \in \mathcal{Q}(kN)} |\{s \in \mathcal{Q}_k^m(kN) \colon t \leq s\}} \\
    & = \frac{\sum_{\substack{t \in \mathcal{Q}(kN) \\ \exists s (s, t) \in B_e}} |\{s \in \mathcal{Q}_k^m(kN) \colon h_t^e \leq s\}|}{\sum_{t \in \mathcal{Q}(kN)} |\{s \in \mathcal{Q}_k^m(kN) \colon t \leq s\}} \\
    & \leq \frac{\sum_{\substack{t \in \mathcal{Q}(kN) \\ \exists s (s, t) \in B_e}} |\{s \in \mathcal{Q}_k^m(kN) \colon h_t^e \leq s\}|}{\sum_{\substack{t \in \mathcal{Q}(kN) \\ \exists s (s, t) \in B_e}} |\{s \in \mathcal{Q}_k^m(kN) \colon t \leq s\}} \\
    & < \frac{1}{|\mathcal{Q}(k)|}
\end{align*}
by our choice of $N$. So
\begin{align*}
    |B_e| & < \frac{1}{|\mathcal{Q}(k)|} \cdot |\{(s, t) \in \mathcal{Q}_k^m(kN) \times \mathcal{Q}(kN) \colon t \leq s \}| \\
    & = \frac{1}{|\mathcal{Q}(k)|} \sum_{s \in \mathcal{Q}_k^m(kN)} |\{t \in \mathcal{Q}(kN) \colon t \leq s\}| \\
    & = \frac{1}{|\mathcal{Q}(k)|} \sum_{s \in \mathcal{Q}_k^m(kN)} |\mathcal{Q}(k)| \\
    & = |\mathcal{Q}_k^m(kN)|.
\end{align*}
Hence, there exists $s \in \mathcal{Q}_k^m(kN)$ satisfying (\ref{CombCondition}).
\end{proof}

\section{$\mathcal{E}_{\infty}$ canonizes CBERs to equality}

We now prove Theorem \ref{T:Main}. Recall the statement of the theorem:

\begin{theorem}\label{T:cubeEquality}
Let $E$ be a countable Borel equivalence relation on $\mathcal{E}_{\infty}$. Then there exists a Ramsey conull set $\mathcal{D} \subseteq \mathcal{E}_{\infty}$ such that for all $B \in \mathcal{D}$ and $A, A' \leq B$,
\begin{align*}
    A \; E \; A' \iff A = A'.
\end{align*}
\end{theorem}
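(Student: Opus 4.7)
The first step is to reduce the theorem to a statement about single Borel maps. By Feldman--Moore, the CBER $E$ is the orbit equivalence relation of a Borel action of a countable group, and hence $E = \Delta \cup \bigcup_n \mathrm{graph}(T_n)$ where $\{T_n\}$ is a family of Borel automorphisms of $\mathcal{E}_\infty$ closed under inverses. Since Ramsey null sets form a $\sigma$-ideal (Theorem \ref{T:CS}), it is enough to prove for each $T = T_n$ the following \emph{dichotomy}: there is a Ramsey conull $\mathcal{D}_T$ such that for every $B \in \mathcal{D}_T$ and $A \leq B$, either $T(A) \leq A$ or $T(A) \nleq B$. From this dichotomy together with its counterpart for $T^{-1}$, we recover the theorem: setting $\mathcal{D} := \bigcap_n (\mathcal{D}_{T_n} \cap \mathcal{D}_{T_n^{-1}})$, if $A, A' \leq B \in \mathcal{D}$ and $T_n(A) = A'$, then the $T_n$-dichotomy applied to $A$ gives $A' \leq A$ (the alternative $T_n(A) \nleq B$ fails because $A' \leq B$), while the $T_n^{-1}$-dichotomy applied to $A'$ gives $A = T_n^{-1}(A') \leq A'$; hence $A = A'$.

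To produce $\mathcal{D}_T$, first apply Lemma \ref{L:Tracial} to $T$ to obtain a Ramsey conull set $\mathcal{C}$ on which $T$ is tracial. By Theorem \ref{T:CS} it suffices to show: for every $D \in \mathcal{C}$ and every $\tau \sqsubseteq D$ with $\mathrm{depth}_D(\tau)$ at least the tracial constant $n_0$, there exists $D' \in [\tau, D]$ on which the dichotomy holds. We build $D'$ by the standard Ellentuck fusion: set $D_{n_0} := D$, choose $D_{n+1} \in [r_n(D_n), D_n]$ carefully at each stage, and let $D' := \lim_n D_n$ in the Polish topology.

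At stage $n$, traciality guarantees that $\mathrm{tr}(T(A), D_n, n)$ depends only on the finite approximation $t$ of $A$ with $\mathrm{dom}(t) = \{0, \ldots, \mu_n(D_n) - 1\}$; call the resulting map $e_n$. Cast as a function on partitions of the index set of a sufficiently large quotient $r_M(D_n)$, the combinatorial lemma (Lemma \ref{L:CombGen}) then yields a partition that prescribes how to coarsen the $D_n$-classes into the initial segment of $D_{n+1}$ in such a way that for every $A \leq D_{n+1}$ with compatible approximation $t$, either $e_n(t) \leq t$ or $e_n(t) \nleq r_n(D_{n+1})$. Passing to the Polish limit $D'$: for any $A \leq D'$, if at some stage $n$ the second alternative occurs, then $\mathrm{tr}(T(A), D', n)$ fails to coarsen $r_n(D')$, so $T(A) \nleq D'$; otherwise, $\mathrm{tr}(T(A), D', n) \leq \mathrm{tr}(A, D', n)$ for every $n$ in an infinite set of levels, and taking unions shows $T(A) \leq A$.

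The delicate core is the combinatorial bookkeeping in this fusion. Lemma \ref{L:CombGen} is stated for partitions of $\{0, \ldots, M' - 1\}$ with only the first $m$ indices forced into distinct blocks, whereas the Ellentuck fusion constraint $r_n(D_{n+1}) = r_n(D_n)$ demands that the first $n$ $D_n$-class-indices be \emph{unmerged} with later indices in the coarsening defining $D_{n+1}$. Reconciling this mismatch — for instance by applying the combinatorial lemma only to the partition of the unfixed class-indices $\{n, n+1, \ldots, n + M - 1\}$, and handling separately those $A$'s whose first classes straddle the boundary between the first $n$ $D_n$-classes and later ones — is the main technical obstacle, together with aligning the three distinct levels in play (the approximation level $k$ of $A$, the trace depth $n$ relative to $D_n$, and the fusion window $M$) and verifying that the level-by-level dichotomy survives the limit.
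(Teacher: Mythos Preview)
Your overall strategy is exactly the paper's: reduce via Feldman--Moore to the single-map dichotomy (this is Lemma~\ref{L:main}), invoke traciality (Lemma~\ref{L:Tracial}) to make the trace of $T(A)$ depend only on a finite approximation of $A$, and then run an Ellentuck fusion, applying the combinatorial Lemma~\ref{L:CombGen} at each stage to select the next $D_{n+1}$. The deduction of the theorem from the dichotomy, and the verification that the limit $D'$ satisfies the dichotomy, are both correct and match the paper.

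The one substantive error is in your final paragraph, where you manufacture a difficulty that is not there. You write that the fusion constraint $r_n(D_{n+1}) = r_n(D_n)$ ``demands that the first $n$ $D_n$-class-indices be unmerged with later indices.'' This is false. What the constraint requires is only that class-indices $0,1,\ldots,n$ lie in \emph{pairwise distinct} blocks of the coarsening pattern; each of them may freely be merged with indices $>n$, since those later $D_n$-classes are disjoint from $\{0,\ldots,\mu_n(D_n)-1\}$ and hence do not affect the restriction. Concretely: if $0,\ldots,n$ are in distinct blocks of the pattern $s'$, then $\mu_i(D_{n+1}) = \mu_i(D_n)$ for all $i \leq n$, and on the initial segment $\{0,\ldots,\mu_n(D_n)-1\}$ the two equivalence relations agree because the first $n$ $D_n$-classes remain unmerged \emph{with one another}. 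This is precisely the condition $s' \in \mathcal{Q}^{n+1}(M')$, so Lemma~\ref{L:CombGen} (applied with $m = n+1$) fits the fusion exactly, with no need to ``apply the combinatorial lemma only to the unfixed class-indices'' or to ``handle separately those $A$'s whose first classes straddle the boundary.'' The paper carries this out directly: at stage $\ell$ it takes $m = n_0 + \ell + 1$, obtains $s' \in \mathcal{Q}^{n_0+\ell+1}(M')$, and sets $B_{\ell+1}$ to be the coarsening of $B_\ell$ induced by $s'$; the verification of $(1_\ell)$ is then immediate. Once you drop this spurious obstacle, your sketch aligns with the paper's argument.
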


The proof of Theorem \ref{T:cubeEquality} relies on the following lemma.

\begin{lemma}\label{L:main}
Let $f: \mathcal{E}_{\infty} \to \mathcal{E}_{\infty}$ be a Borel map. Then there exists a Ramsey conull set $\mathcal{D} \subseteq \mathcal{E}_{\infty}$ such that if $B \in \mathcal{D}$ and $A \leq B$, then
\begin{align*}
    f(A) \leq B \implies f(A) \leq A.
\end{align*}
\end{lemma}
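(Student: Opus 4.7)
My approach combines Lemma \ref{L:Tracial} with Lemma \ref{L:CombGen} via a fusion argument. First apply Lemma \ref{L:Tracial} to $f$ to obtain a Ramsey conull $\mathcal{C}\subseteq\mathcal{E}_{\infty}$ on which $f$ is tracial. By Theorem \ref{T:CS}, it suffices to show that for every non-empty Ellentuck cube $[\tau,D]$ with $D\in\mathcal{C}$ (and, without loss of generality, $D\in[\tau,D]$), there exists $D'\in[\tau,D]$ with the property that $f(A)\leq D'\implies f(A)\leq A$ for every $A\leq D'$. I construct such a $D'$ as a fusion limit $D'=\lim_n D_n$ with $D_{n_0}:=D$, $n_0:=\mathrm{depth}_D(\tau)$, and $D_{n+1}\in[r_n(D_n),D_n]\cap\mathcal{C}$; the existence of the limit in $[\tau,D]$ follows exactly as in the proof of Proposition \ref{Prop:Tracial}.

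The stage-$n$ step is where Lemma \ref{L:CombGen} enters. Let $n_0(D_n)$ denote the traciality threshold given by Lemma \ref{L:Tracial} at $D_n\in\mathcal{C}$. Apply Lemma \ref{L:CombGen} with $m=n$ and $M=n_0(D_n)$ to obtain $M_n'\geq\max(n,n_0(D_n))$ with the stated combinatorial property. Define a map $e_n\colon \mathcal{Q}(M_n')\to\mathcal{Q}(M_n')$ as follows: each $t\in\mathcal{Q}(M_n')$ corresponds, via the quotient modulo $D_n$, to a coarsening of $r_{M_n'}(D_n)$; pick any witness $A_t\leq D_n$ (e.g., merge the first $M_n'$ $D_n$-classes according to $t$ and leave later $D_n$-classes as singletons). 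By traciality, $\mathrm{tr}(f(A_t),D_n,M_n')$ depends only on $t$, and we set
\[ e_n(t)\;:=\;\mathrm{tr}(f(A_t),D_n,M_n')\vee r_{M_n'}(D_n), \]
viewed as an element of $\mathcal{Q}(M_n')$ through the quotient. Lemma \ref{L:CombGen} then yields some $s_n\in\mathcal{Q}^n(M_n')$ such that, for every $t\leq s_n$, either $e_n(t)\leq t$ or $e_n(t)\nleq s_n$. Construct $D_{n+1}$ by grouping the first $M_n'$ $D_n$-classes exactly according to $s_n$ (the condition $s_n\in\mathcal{Q}^n(M_n')$ guarantees $r_n(D_{n+1})=r_n(D_n)$) and leaving subsequent $D_n$-classes as singletons, and then refine $D_{n+1}$ inside $[r_n(D_n),D_n]$ to land in $\mathcal{C}$. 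This last refinement is harmless because the universal statement drawn from $s_n$ only becomes easier under further refinement.

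For the verification, fix $A\leq D'$ with $f(A)\leq D'$ and any $n\geq n_0$. Let $t_n\in\mathcal{Q}(M_n')$ be the image of $\mathrm{tr}(A,D_n,M_n')$ under the quotient modulo $D_n$. Since $A\leq D'\leq D_{n+1}$ and $D_{n+1}$ groups the first $M_n'$ $D_n$-classes exactly according to $s_n$, we have $t_n\leq s_n$. Since $f(A)\leq D'\leq D_n$, the trace $\mathrm{tr}(f(A),D_n,M_n')$ is already a coarsening of $r_{M_n'}(D_n)$, so the join in the definition of $e_n(t_n)$ collapses and $e_n(t_n)$ equals the image of $\mathrm{tr}(f(A),D_n,M_n')$ in $\mathcal{Q}(M_n')$. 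Furthermore, $f(A)\leq D_{n+1}$ entails $e_n(t_n)\leq s_n$. The combinatorial dichotomy then forces $e_n(t_n)\leq t_n$, which unwinds to $\mathrm{tr}(f(A),D_n,M_n')\leq\mathrm{tr}(A,D_n,M_n')$: every $A$-related pair inside $\{0,\ldots,\mu_{M_n'}(D_n)-1\}$ is also $f(A)$-related. Since $\mu_{M_n'}(D_n)\to\infty$ as $n\to\infty$, we conclude $f(A)\leq A$.

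The main obstacle I anticipate is the bookkeeping required to simultaneously respect the combinatorial constraint from $s_n$ at each stage and to stay inside the traciality-conull set $\mathcal{C}$. The key observation that resolves this is that refinement of $D_{n+1}$ never hurts: the consequence drawn from $s_n$ only quantifies over $A\leq D_{n+1}$, so we can freely refine into $\mathcal{C}$ after choosing $s_n$. A more delicate point in the verification is the collapse of the join $\mathrm{tr}(f(A),D_n,M_n')\vee r_{M_n'}(D_n)$ to $\mathrm{tr}(f(A),D_n,M_n')$ in the good case $f(A)\leq D_n$; this collapse is precisely what converts the combinatorial conclusion $e_n(t_n)\leq t_n$ into the desired containment $f(A)\leq A$.
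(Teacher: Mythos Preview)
Your overall strategy---traciality (Lemma \ref{L:Tracial}) followed by a fusion that invokes Lemma \ref{L:CombGen} at each stage---is exactly the paper's approach, but the verification step contains a real gap. To pass from the combinatorial conclusion $e_n(t_n)\leq t_n$ to $\mathrm{tr}(f(A),D_n,M_n')\leq\mathrm{tr}(A,D_n,M_n')$, you need $e_n(t_n)$ to equal the quotient of $\mathrm{tr}(f(A),D_n,M_n')$. But $e_n(t_n)$ is defined from $f(A_{t_n})$, not from $f(A)$; identifying the two traces requires traciality, and Definition \ref{D:Trace} only applies to pairs $A,A'\leq D_n$ for which there is $k$ with $s=r_k(A)=r_k(A')$ and $\mathrm{dom}(s)=\{0,\ldots,\mu_{M_n'}(D_n)-1\}$, i.e.\ $\mu_{M_n'}(D_n)\in\mu(A)$. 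Your witness $A_{t_n}$ has this property by construction, but an arbitrary $A\leq D'$ need not: nothing in your fusion guarantees that $\mu_{M_n'}(D_n)$ lies in $\mu(D')$, let alone in $\mu(A)$ (later stages may merge the relevant $D_{n+1}$-class with an earlier one, and your refinement into $\mathcal{C}$ can already destroy it at stage $n{+}1$). So the sentence ``the join in the definition of $e_n(t_n)$ collapses and $e_n(t_n)$ equals the image of $\mathrm{tr}(f(A),D_n,M_n')$'' is not justified, and the argument does not go through.

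The paper avoids this by two adjustments. First, it uses traciality once at the initial $B_0\in\mathcal{C}$, so there is no need to refine into $\mathcal{C}$ at each stage (and your parameter $M=n_0(D_n)$ becomes unnecessary). Second, it indexes the fusion so that the relevant level at stage $\ell$ is $\mu_{n_0+\ell+1}(B_{\ell+1})$, which by construction equals $\mu_{n_0+\ell+1}(B)$ and hence is a genuine element of $\mu(B)$; since $\mu(A)\subseteq\mu(B)$ for every $A\leq B$, one can then select infinitely many stages $\ell_i$ at which the traciality hypothesis $\mu_k(A)=\mu_{n_0+\ell_i+1}(B)$ is met for the given $A$, and derive $f(A)\leq A$ from those stages alone. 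Your construction can be repaired along the same lines, but the fix is not merely cosmetic: it changes both the indexing and the statement of what is actually proved at each stage.
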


\begin{proof}[Proof of Theorem \ref{T:cubeEquality}.]
Given a CBER $E$ on $\mathcal{E}_{\infty}$, we may fix Borel involutions $\{f_n \colon n \in \mathbb{N}\}$ on $\mathcal{E}_{\infty}$ such that $E = \bigcup_{n \in \mathbb{N}} f_n$. For each $f_n$, fix a Ramsey conull set $\mathcal{D}_n \subseteq \mathcal{E}_{\infty}$ as in Lemma \ref{L:main}. Then $\mathcal{D} := \bigcap_{n \in \mathbb{N}} \mathcal{D}_n$ is Ramsey conull. Suppose $B \in \mathcal{D}$ and $A, A' \leq B$ satisfy $A \; E \; A'$. Then we can find $n \in \mathbb{N}$ such that $f_n(A) = A'$ and $f_n(A') = A$. Since $B \in \mathcal{D}_n$, we have $A \leq A'$ and $A' \leq A$ by Lemma \ref{L:main}, hence $A = A'$.
\end{proof}

It remains to prove Lemma \ref{L:main}.

\begin{proof}[Proof of Lemma \ref{L:main}.]
Consider an arbitrary cube $[s, D] \subseteq \mathcal{E}_{\infty}$ such that $[s, D] \neq \emptyset$. We will find $B \in [s, D]$ such that if $A \leq B$, then $f(A) \leq B$ implies $f(A) \leq A$. Since
\begin{align*}
    \{C \in \mathcal{E}_{\infty} \colon \text{for all } A \leq C, f(A) \leq C \implies f(A) \leq A\}
\end{align*}
is downwards-closed, finding such $B$ is sufficient.

By Proposition \ref{Prop:Tracial}, we can fix a Ramsey conull set $\mathcal{C} \subseteq \mathcal{E}_{\infty}$ such that $f$ is tracial on $\mathcal{C}$. Fix any $B_0 \in \mathcal{C} \cap [s, D]$. Since $f$ is tracial on $\mathcal{C}$, we can find $n_0 \geq \mathrm{depth}_{B_0}(s)$ such that for all $n \geq n_0$, if $A, A' \leq B_0$ and there is $k \in \mathbb{N}$ such that $\mu_k(A) = \mu_k(A') = \mu_n(B_0)$, then $r_k(A) = r_k(A')$ implies $\mathrm{tr}(f(A), B_0, n) = \mathrm{tr}(f(A'), B_0, n)$. Recall from Section \ref{S:tracial} that $\mathrm{depth}_{B_0}(s)$ is the unique $m$ such that $\mathrm{dom}(s) = \{0, \ldots, \mu_m(B_0) - 1 \}$. Since $n_0 \geq \mathrm{depth}_{B_0}(s)$ and $s \sqsubseteq B_0$, observe that $s \sqsubseteq r_{n_0}(B_0)$.

We will construct a sequence $(B_\ell \colon \ell \in \mathbb{N})$ such that the following properties hold for all $\ell \in \mathbb{N}$:
\begin{enumerate}[($1_\ell$)]
    \item $B_{\ell + 1} \in [r_{n_0 + \ell}(B_\ell), B_\ell]$;
    \item if $A \leq B_{\ell + 1}$ and there is $k$ such that $\mu_k(A) = \mu_{n_0 + \ell + 1}(B_{\ell + 1})$, then either
    \begin{align*}
        & \mathrm{tr}(f(A), B_{\ell + 1}, n_0 + \ell + 1) \leq \mathrm{tr}(A, B_{\ell + 1}, n_0 + \ell + 1) \text{ or } \\
        & \mathrm{tr}(f(A), B_{\ell + 1}, n_0 + \ell + 1) \nleq r_{n_0 + \ell + 1}(B_{\ell + 1}).
    \end{align*}
\end{enumerate}

Suppose we have constructed such a sequence. Let $B$ be the limit of $(B_\ell \colon \ell \in \mathbb{N})$ in the Polish topology on $\mathcal{E}_{\infty}$. Note that condition $(1_\ell)$ implies that such $B$ exists. Observe that $B \in [r_{n_0 + \ell}(B_\ell), B_\ell]$ for all $\ell \in \mathbb{N}$, using $(1_\ell)$ and Axiom A.2.(2) in \cite[Section 5.1]{Ramsey}. In particular, $B \in [r_{n_0}(B_0), B_0]$, so $B \in [s, D]$. We claim that $B$ satisfies the desired condition:

\begin{claim}
If $A \leq B$ and $f(A) \leq B$, then $f(A) \leq A$.
\end{claim}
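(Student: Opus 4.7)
My plan is to use the hypothesis $f(A)\leq B$ to rule out the second alternative in condition $(2_\ell)$ for every stage, and then let $n\to\infty$ in the resulting first alternative to conclude $f(A)\leq A$.

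First I would unpack what $f(A)\leq B$ gives. The trace $\mathrm{tr}(f(A),B,n)$ is simply the restriction of $f(A)$ to $\{0,\ldots,\mu_n(B)-1\}$, and since $\{0,\ldots,\mu_n(B)-1\}$ is a union of complete $B$-classes, restricting the coarser partition $f(A)$ to it produces a partition that is still coarser than $B$ restricted to it. Hence $\mathrm{tr}(f(A),B,n)\leq r_n(B)$ for every $n\in\mathbb{N}$. This is the key global fact that will kill the second disjunct in $(2_\ell)$ at every level.

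Next I would reconcile the traces taken with respect to $B$ versus those taken with respect to each $B_{\ell+1}$. By $(1_\ell)$ we have $B\in[r_{n_0+\ell+1}(B_{\ell+1}),B_{\ell+1}]$, so $r_{n_0+\ell+1}(B)=r_{n_0+\ell+1}(B_{\ell+1})$ and in particular $\mu_{n_0+\ell+1}(B)=\mu_{n_0+\ell+1}(B_{\ell+1})$; thus traces at level $n=n_0+\ell+1$ agree whether computed with respect to $B$ or $B_{\ell+1}$. Moreover $A\leq B\leq B_{\ell+1}$, so the hypothesis of $(2_\ell)$ is satisfied whenever there is some $k$ with $\mu_k(A)=\mu_{n}(B)$ for $n=n_0+\ell+1$. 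Combined with the previous paragraph, this reduces $(2_\ell)$ to
\[\mathrm{tr}(f(A),B,n)\leq \mathrm{tr}(A,B,n)\]
at every such level.

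Finally, since $A\leq B$ the sequence $(\mu_k(A))_k$ is a subsequence of $(\mu_n(B))_n$, so there are infinitely many $n\geq n_0+1$ for which $\mu_n(B)=\mu_k(A)$ for some $k$; at each such $n$ the displayed inequality holds. Given any $i,j\in\mathbb{N}$, pick such an $n$ large enough that $i,j<\mu_n(B)$: if $i\,A\,j$ then $i\,\mathrm{tr}(A,B,n)\,j$, whence $i\,\mathrm{tr}(f(A),B,n)\,j$ and so $i\,f(A)\,j$. This yields $f(A)\leq A$, proving the claim. The only point that requires care is the bookkeeping connecting the traces along the fusion sequence to the traces along its limit $B$; the real combinatorial content has already been absorbed into the construction of $(B_\ell)$ via property $(2_\ell)$, so there is no substantial remaining obstacle here.
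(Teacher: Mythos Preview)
Your proposal is correct and follows essentially the same approach as the paper's proof: both rule out the second alternative in $(2_\ell)$ using $f(A)\leq B$, transfer traces between $B$ and $B_{\ell+1}$ via the equality $r_{n_0+\ell+1}(B)=r_{n_0+\ell+1}(B_{\ell+1})$, and then let the level tend to infinity along the infinitely many $n$ at which some $\mu_k(A)$ equals $\mu_n(B)$.
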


\begin{proof}[Proof of Claim.]
Consider any $A \leq B$ with $f(A) \leq B$. We can find $\ell_0 < \ell_1 < \ldots$ in $\mathbb{N}$ such that for each $i$, there exists $m_i \in \mathbb{N}$ such that $\mu_{m_i}(A) = \mu_{n_0 + \ell_i + 1}(B) = \mu_{n_0 + \ell_i + 1}(B_{\ell_i + 1})$. For each $i \in \mathbb{N}$, condition $(2_{\ell_i})$ implies
\begin{align*}
    & \mathrm{tr}(f(A), B_{\ell_i + 1}, n_0 + \ell_i + 1) \leq \mathrm{tr}(A, B_{\ell_i + 1}, n_0 + \ell_i + 1) \text{ or } \\
    & \mathrm{tr}(f(A), B_{\ell_i + 1}, n_0 + \ell_i + 1) \nleq r_{n_0 + \ell_i + 1}(B_{\ell_i + 1}).
\end{align*}
Since $r_{n_0 + \ell_i + 1}(B) = r_{n_0 + \ell_i + 1}(B_{\ell_i + 1})$, we have
\begin{align*}
    \mathrm{tr}(f(A), B, n_0 + \ell_i + 1) \leq \mathrm{tr}(A, B, n_0 + \ell_i + 1) \text{ or } \mathrm{tr}(f(A), B, n_0 + \ell_i + 1) \nleq r_{n_0 + \ell_i + 1}(B).
\end{align*}
We are assuming $f(A) \leq B$, so we must have 
\begin{align*}
    \mathrm{tr}(f(A), B, n_0 + \ell_i + 1) = f(A) \upharpoonright \mu_{n_0 + \ell_i + 1}(B) \leq r_{n_0 + \ell_i + 1}(B).
\end{align*}
So for each $i$, we have 
\begin{align*}
    f(A) \upharpoonright \mu_{n_0 + \ell_i + 1}(B) = \mathrm{tr}(f(A), B, n_0 + \ell_i + 1) \leq \mathrm{tr}(A, B, n_0 + \ell_i + 1) = A \upharpoonright \mu_{n_0 + \ell_i + 1}(B).
\end{align*} 
Since this relation holds for every $i \in \mathbb{N}$ and $\ell_i \to \infty$ as $i \to \infty$, we conclude that $f(A) \leq A$.
\end{proof}

It remains to construct $(B_\ell \colon \ell \in \mathbb{N})$. Suppose we have constructed $B_\ell$ for some $\ell \in \mathbb{N}$. Fix $M'$ satisfying Lemma \ref{L:CombGen} in the case $m = M = n_0 + \ell + 1$. We define auxiliary maps $h \colon \mathcal{Q}(M') \to [\emptyset, B_\ell]$ and $g \colon \mathcal{E}_{\infty} \to \mathcal{Q}(M')$. Given any $t \in \mathcal{Q}(M')$, let $h(t) \in [\emptyset, B_\ell]$ be defined as follows: for $i, j \in \mathbb{N}$ with $i \; B_\ell \; \mu_{k_i}(B_\ell)$ and $j \; B_\ell \; \mu_{k_j}(B_\ell)$,
\begin{align*}
    i \; h(t) \; j \iff k_i = k_j \text{ or } \big(k_i, k_j < M' \text{ and } k_i \; t \; k_j \big).
\end{align*}
Intuitively, $h(t)$ is the coarsening of $B_\ell$ induced by $t$. For any $C \in \mathcal{E}_{\infty}$ define $g(C)$ as follows. If $C \upharpoonright \mu_{M'}(B_\ell) \leq r_{M'}(B_\ell)$, let $g(C) \in \mathcal{Q}(M')$ be defined by
\begin{align*}
    i \; g(C) \; j \iff \mu_i(B_\ell) \; C \; \mu_j(B_\ell)
\end{align*}
for all $i, j < M'$. Otherwise, if $C \upharpoonright \mu_{M'}(B_\ell) \nleq r_{M'}(B_\ell)$, let $i \; g(C) \; j$ if and only if $i = j$. Observe that $h(g(C)) \upharpoonright \mu_{M'}(B_\ell) = C \upharpoonright \mu_{M'}(B_\ell)$ whenever $C \upharpoonright \mu_{M'}(B_\ell) \leq r_{M'}(B_\ell)$. In particular, this equality holds if $C \leq B_\ell$.

Now define $e \colon \mathcal{Q}(M') \to \mathcal{Q}(M')$ by $e(t) := g(f(h(t)))$. By Lemma \ref{L:CombGen}, we can find $s' \in \mathcal{Q}^{n_0 + \ell + 1}(M')$ such that for all $t \leq s'$, either $e(t) \leq t$ or $e(t) \nleq s'$. Let $B_{\ell + 1} := h(s')$. We will show that $B_{\ell + 1}$ satisfies conditions $(1_\ell)$ and $(2_\ell)$. By definition of $h$, we know $B_{\ell + 1} \leq B_\ell$. Since $s' \in \mathcal{Q}^{n_0 + \ell + 1}(M')$, note that if $i < j < n_0 + \ell + 1$, then $\neg \; i \; s' \; j$, hence $\neg \; \mu_i(B_\ell) \; B_{\ell + 1} \; \mu_j(B_\ell)$. Then $\mu_i(B_\ell) = \mu_i(B_{\ell + 1})$ for all $i < n_0 + \ell + 1$, and so we have $r_{n_0 + \ell}(B_\ell) = r_{n_0 + \ell}(B_{\ell + 1})$. Hence, $(1_\ell)$ is satisfied.

To verify $(2_\ell)$, suppose we have $A \leq B_{\ell + 1}$ such that for some $k$, $\mu_k(A) = \mu_{n_0 + \ell + 1}(B_{\ell + 1})$. Furthermore, suppose $\mathrm{tr}(f(A), B_{\ell + 1}, n_0 + \ell + 1) \leq r_{n_0 + \ell + 1}(B_{\ell + 1})$. We will show that $\mathrm{tr}(f(A), B_{\ell + 1}, n_0 + \ell + 1) \leq \mathrm{tr}(A, B_{\ell + 1}, n_0 + \ell + 1)$. Since $B_{\ell + 1} \leq B_\ell \leq B_0$, there is some $n \geq n_0 + \ell + 1$ such that $\mu_{n_0 + \ell + 1}(B_{\ell + 1}) = \mu_n(B_0)$. Then observe that 
\begin{align*}
    \mu_k(A) = \mu_n(B_0) = \mu_{n_0 + \ell + 1}(B_{\ell + 1}) = \mu_{M'}(B_\ell).
\end{align*}
Since $A \leq B_{\ell + 1} \leq B_\ell$, we know $h(g(A)) \upharpoonright \mu_{M'}(B_\ell) = A \upharpoonright \mu_{M'}(B_\ell)$ and so $r_k(A) = r_k(h(g(A)))$. Since $\mu_n(B_0) = \mu_k(A) = \mu_k(h(g(A)))$ and $r_k(A) = r_k(h(g(A)))$, the traciality condition implies $f(A) \upharpoonright \mu_n(B_0) = f(h(g(A))) \upharpoonright \mu_n(B_0)$. We can rewrite this equality as $f(A) \upharpoonright \mu_{M'}(B_\ell) = f(h(g(A))) \upharpoonright \mu_{M'}(B_\ell)$. Then $g(f(A)) = g(f(h(g(A)))) = e(g(A))$, where the first equality is by definition of $g$ and the second is by definition of $e$. By our assumption that $\mathrm{tr}(f(A), B_{\ell + 1}, n_0 + \ell + 1) \leq r_{n_0 + \ell + 1}(B_{\ell + 1})$, note that $e(g(A)) = g(f(A)) \leq s'$. We have $g(A) \leq s'$ since $A \leq B_{\ell + 1}$, so $g(f(A)) = e(g(A)) \leq g(A)$ by choice of $s'$. Since $A \leq B_{\ell + 1}$ and $f(A) \upharpoonright \mu_{M'}(B_\ell) \leq r_{n_0 + \ell + 1}(B_{\ell + 1}) \leq r_{M'}(B_\ell)$, we conclude
\begin{align*}
    \mathrm{tr}(f(A), B_{\ell + 1}, n_0 + \ell + 1) & = f(A) \upharpoonright \mu_{n_0 + \ell + 1}(B_{\ell + 1}) \\
    & = f(A) \upharpoonright \mu_{M'}(B_\ell) \\
    & \leq A \upharpoonright \mu_{M'}(B_\ell)
\end{align*}
from the definition of $g$ and the fact that $g(f(A)) \leq g(A)$. Thus, $(2_\ell)$ holds.
\end{proof}

\section{$\mathcal{S}_{\infty}(2^{<\mathbb{N}})$ canonizes CBERs to equality}\label{S:Last} 

In this section we sketch how our arguments adapt  to prove  \cite[Theorem 8.1]{KSZ} for the special case of CBERs. Recall the definitions and notation from Example \ref{Example:Tree}.

\begin{theorem}[Kanovei-Sabok-Zapletal \cite{KSZ}] \label{T:trees}
Let $E$ be a countable Borel equivalence relation on $\mathcal{S}_{\infty}(2^{<\mathbb{N}})$. Then there is a Ramsey conull  $\mathcal{D}\subseteq \mathcal{S}_{\infty}(2^{<\mathbb{N}})$ so that for all $B\in \mathcal{D}$ and every $A,A'\leq B$ we have 
\[A \; E \; A' \iff A=A'.\]
\end{theorem}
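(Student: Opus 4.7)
The plan is to adapt the argument of Section 6 to Milliken's space $\mathcal{S}_{\infty}(2^{<\mathbb{N}})$. As in the proof of Theorem \ref{T:cubeEquality}, I would write $E = \bigcup_{n \in \mathbb{N}} f_n$ for Borel involutions $f_n$ and reduce to the Milliken-space analogue of Lemma \ref{L:main}: for every Borel $f \colon \mathcal{S}_{\infty}(2^{<\mathbb{N}}) \to \mathcal{S}_{\infty}(2^{<\mathbb{N}})$ there is a Ramsey conull $\mathcal{D}_f$ such that for all $B \in \mathcal{D}_f$ and $A \leq B$, $f(A) \leq B$ implies $f(A) \leq A$. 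Intersecting the sets $\mathcal{D}_{f_n}$ and exploiting that each $f_n$ is an involution forces $A = A'$ whenever $A \; E \; A'$ and $A, A' \leq B$ with $B$ in the intersection.

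To prove this analogue of Lemma \ref{L:main}, I would invoke Proposition \ref{Prop:Tracial} using the trace $\mathcal{T}$ from Example \ref{Example:Tree} to secure a Ramsey conull $\mathcal{C}$ on which $f$ is tracial. Given a nonempty Ellentuck cube $[\sigma, D]$, fix $B_0 \in \mathcal{C} \cap [\sigma, D]$ and let $n_0$ witness traciality of $f$ at $B_0$. The main construction is a fusion sequence $(B_\ell)_\ell$ with $B_{\ell+1} \in [r_{n_0+\ell}(B_\ell),\, B_\ell]$ enforcing the tree analogue of condition $(2_\ell)$: for every $A \leq B_{\ell+1}$ whose $k$-th approximation agrees on domain with $r_{n_0+\ell+1}(B_{\ell+1})$, either $\mathrm{tr}_{\mathcal{T}}(f(A), B_{\ell+1}, n_0+\ell+1) \leq \mathrm{tr}_{\mathcal{T}}(A, B_{\ell+1}, n_0+\ell+1)$, or $\mathrm{tr}_{\mathcal{T}}(f(A), B_{\ell+1}, n_0+\ell+1) \nleq r_{n_0+\ell+1}(B_{\ell+1})$. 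Taking the Polish limit $B$, traciality transfers the dichotomy to $B$, and any $A \leq B$ with $f(A) \leq B$ is forced onto the first branch at cofinally many levels, yielding $f(A) \leq A$.

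The main obstacle is the tree analogue of the combinatorial Lemma \ref{L:CombGen}. Precisely, I would aim to show: for every $m, M$ there exists $M'$ such that for every map $e$ on finite strong subtrees of depth $M'$ inside a fixed ambient strong subtree, there is a strong subtree $s$ of depth $M'$ with a prescribed initial segment of depth $m$, such that for every finite strong subtree $t \leq s$ of depth $M'$, either $e(t) \leq t$ or $e(t) \nleq s$. As in the partition case, one would then introduce coding maps $h$ (sending a shape $t$ to a strong subtree of $B_\ell$ built through its $(n_0+\ell+1)$-st level) and $g$ (recording the shape of a strong subtree at depth $M'$), setting $e(t) := g(f(h(t)))$. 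Unlike the $\mathcal{E}_{\infty}$ setting, where the key probabilistic input required entropy bounds on equipartitions (Lemma \ref{L:Comb}), the tree case should be accessible via iterated applications of the finite Halpern-L\"auchli theorem: one enumerates the finitely many isomorphism types of depth-$M'$ strong subtrees and successively shrinks the ambient tree to stabilize, for each such type, the finite coloring that records the relative location of $e(t)$ within the ambient tree up to depth $M'$. With this combinatorial lemma in hand, the remaining arguments of Section 6 transfer essentially verbatim.
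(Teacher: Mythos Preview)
Your overall architecture---decomposing $E$ into Borel involutions, reducing to an ``$f(A)\leq B \Rightarrow f(A)\leq A$'' lemma, invoking Proposition~\ref{Prop:Tracial} with the trace of Example~\ref{Example:Tree}, and building $B$ by a fusion $(B_\ell)_\ell$---matches the paper precisely. The divergence is entirely in the finite combinatorial step, and there your proposal both differs from the paper and carries a real gap.

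The paper does \emph{not} formulate the combinatorial lemma in terms of finite strong subtrees, and it does \emph{not} use Halpern--L\"auchli. It proves instead Lemma~\ref{L:CombTree}, which concerns \emph{sections of a $(k,N)$-partition} $P$ of a finite set $X$: for any $e\colon \mathcal{T}(P)\to\mathcal{P}(X)$ there is a complete section $E$ with $e(F)\subseteq F$ or $e(F)\not\subseteq E$ for every $F\subseteq E$. In the fusion the $k=2^{n_0+\ell}$ pieces of $P$ are the descendants, at some deep level of $B_\ell$, of each immediate-successor direction out of $B_\ell(n_0+\ell-1)$; a complete section $E$ is a choice of $B_{\ell+1}(n_0+\ell)$, and sub-sections $F\subseteq E$ are exactly the possible traces $\langle A\rangle\cap B_{\ell+1}(n_0+\ell)$ for $A\leq B_{\ell+1}$. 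The proof is the same probabilistic trick as in Lemma~\ref{L:Comb}, only easier: one checks $|\mathcal{B}|/|\mathcal{CT}(P)|\leq 2^k\bigl(1-((N-1)/N)^k\bigr)\to 0$. As for your Halpern--L\"auchli plan, stabilizing ``the relative location of $e(t)$'' over each isomorphism type makes $e$ constant on each type inside some $s$, say $e(t)=c_\tau$; but nothing then rules out $c_\tau\leq s$ with $c_\tau\nleq t$ for some $t$ of type $\tau$ in $s$, in which case both alternatives fail. The dichotomy ``$e(t)\leq t$ or $e(t)\nleq s$'' is not a coloring of $t$ that Ramsey methods can homogenize---eliminating the bad intermediate case is exactly what the counting argument accomplishes, and it is the genuine combinatorial content here.
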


As in the case of Theorem \ref{T:cubeEquality}, Theorem \ref{T:trees} directly follows from the following lemma.

\begin{lemma}\label{L:mainTree}
Let $f: \mathcal{S}_{\infty}(2^{<\mathbb{N}}) \to \mathcal{S}_{\infty}(2^{<\mathbb{N}})$ be a Borel map. Then there exists a Ramsey conull set $\mathcal{D} \subseteq \mathcal{S}_{\infty}(2^{<\mathbb{N}})$ such that if $B \in \mathcal{D}$ and $A \leq B$, then
\begin{align*}
    f(A) \leq B \implies f(A) \leq A.
\end{align*}
\end{lemma}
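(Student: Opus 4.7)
The plan is to follow the proof of Lemma \ref{L:main} essentially line-by-line, substituting strong subtrees of $2^{<\mathbb{N}}$ for partitions of $\mathbb{N}$ throughout and using the trace $\mathcal{T}$ on $\mathcal{S}_{\infty}(2^{<\mathbb{N}})$ from Example \ref{Example:Tree} in place of the partition trace. As in Lemma \ref{L:main}, it suffices to fix a nonempty Ellentuck cube $[s,D] \subseteq \mathcal{S}_{\infty}(2^{<\mathbb{N}})$ and construct $B \in [s,D]$ such that $A \leq B$ and $f(A) \leq B$ together imply $f(A) \leq A$, since the collection of such $B$ is downward closed. The first step is to apply Proposition \ref{Prop:Tracial}, which is stated and proved for arbitrary topological Ramsey spaces equipped with a trace, to the pair $(\mathcal{S}_{\infty}(2^{<\mathbb{N}}),\mathcal{T})$, obtaining a Ramsey conull $\mathcal{C}$ on which $f$ is tracial. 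Fix $B_0 \in \mathcal{C} \cap [s,D]$ and $n_0 \geq \mathrm{depth}_{B_0}(s)$ witnessing the traciality.

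The bulk of the argument is then the construction of a fusion sequence $(B_\ell)_{\ell \in \mathbb{N}}$ satisfying $B_{\ell+1} \in [r_{n_0+\ell}(B_\ell),B_\ell]$ together with a local trace condition analogous to $(2_\ell)$: for every $A \leq B_{\ell+1}$ for which some $r_k(A)$ has $B_{\ell+1}$-depth exactly $n_0+\ell+1$, either
\[\mathrm{tr}_{\mathcal{T}}(f(A),B_{\ell+1},n_0+\ell+1) \leq \mathrm{tr}_{\mathcal{T}}(A,B_{\ell+1},n_0+\ell+1)\]
or $\mathrm{tr}_{\mathcal{T}}(f(A),B_{\ell+1},n_0+\ell+1) \not\leq r_{n_0+\ell+1}(B_{\ell+1})$, where the partial order on traces at level $n_0+\ell+1$ is the natural order induced by inclusion of $\langle \cdot \rangle \cap B_{\ell+1}(n_0+\ell)$ (equivalently, by the strong-subtree relation on the corresponding finite tops). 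Taking $B := \lim_\ell B_\ell$ in the Polish topology and repeating the claim inside the proof of Lemma \ref{L:main} verbatim then yields the conclusion. The inductive step depends on a tree analog of Lemma \ref{L:CombGen}: for every $m, M \in \mathbb{N}$ there is a height $M' \geq m,M$ such that for every map $e$ on the set of finite strong subtrees of $2^{<\mathbb{N}}$ of height $M'$ extending a common initial skeleton of height $m$, there exists a finite strong subtree $s$ so that for all strong subtrees $t \leq s$, either $e(t) \leq t$ or $e(t) \not\leq s$. The step then mirrors the partition case exactly: auxiliary maps $h$ (lifting finite strong subtrees to coarsenings of $B_\ell$) and $g$ (collapsing arbitrary $C \in \mathcal{S}_{\infty}(2^{<\mathbb{N}})$ to finite-tree data below level $M'$) are set up, the combinatorial lemma is applied to $e := g \circ f \circ h$, and $B_{\ell+1}$ is taken to be the strong subtree of $B_\ell$ encoded by the returned $s'$.

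The main obstacle is proving the combinatorial lemma for strong subtrees. One route is to adapt the probabilistic argument of Lemma \ref{L:Comb} using uniform measure on strong subtrees of height $M'$ extending a fixed skeleton, bounding the analog of $r_t^e$ via entropy estimates as in (\ref{entropyBound}) and the growth rate of the number of strong subtrees of $2^{\leq M'}$; here the counting is more delicate than for equipartitions because strong-subtree counts are governed by products over branches rather than simple multinomials, but the asymptotic vanishing of ratios should persist. A potentially cleaner alternative is to derive the lemma directly from the Halpern--L\"auchli/Milliken Ramsey theorem applied to the two-colouring $t \mapsto [e(t) \leq t]$ on finite strong subtrees of a sufficiently tall initial segment: a monochromatic $s$ in the first colour immediately witnesses the first alternative for every $t \leq s$, while monochromaticity in the second colour forces $e(t) \not\leq s$ for every $t \leq s$ by a direct case analysis (otherwise some $t \leq s$ with $e(t) \leq s$ would contradict the colour of $t$). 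Once this combinatorial lemma is established, the rest of the argument transfers from the proof of Lemma \ref{L:main} without further modification.
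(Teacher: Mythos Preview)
Your overall outline---apply Proposition~\ref{Prop:Tracial} to the trace of Example~\ref{Example:Tree}, build a fusion sequence $(B_\ell)$ with a local trace dichotomy at each step, and take the limit---matches the paper's sketch. The divergence, and the gap, is in the combinatorial step. The paper does \emph{not} formulate a direct tree analog of Lemma~\ref{L:CombGen} about finite strong subtrees; instead it proves the much simpler Lemma~\ref{L:CombTree}, a single-level statement about complete sections of a $(k,N)$-partition of a finite set, and applies it to the partition of a deep level of $B_\ell$ into the cones above the immediate successors of $B_\ell(n_0+\ell-1)$. This abstraction is exactly what the tree trace records (a subset of a fixed level), so the counting collapses to the elementary estimate $2^k\big(1-((N-1)/N)^k\big)\to 0$ rather than anything involving strong-subtree enumeration.

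Your proposed route~(b) via a Halpern--L\"auchli/Milliken two-colouring does not work. If the colouring $t\mapsto [e(t)\leq t]$ is monochromatic on $[\emptyset,s]$ in the second colour, you only know $e(t)\not\leq t$ for every $t\leq s$; this in no way prevents $e(t)\leq s$ while $e(t)\not\leq t$ (two strong subtrees of $s$ need not be comparable), so the ``direct case analysis'' you invoke is vacuous. Your route~(a) is not obviously wrong, but you have not carried it out, and the counting for strong subtrees of all heights $\leq M'$ inside a given $s$ is genuinely messier than for equipartitions. The fix is to replace your multi-level tree lemma by the paper's single-level section lemma: since the trace of Example~\ref{Example:Tree} at level $n$ is just a subset of $B(n-1)$, one only needs to control a map $e\colon \mathcal{T}(P)\to\mathcal{P}(X)$ on sections of a partition, and the probabilistic bound becomes a two-line computation.
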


The proof of Lemma \ref{L:mainTree} is along the lines of the proof of Lemma \ref{L:main}. 
In particular, one starts with an arbitrary non-empty cube $[s,D]\subseteq \mathcal{S}_{\infty}(2^{<\mathbb{N}})$ and, using Proposition \ref{Prop:Tracial} for the trace $\mathcal{T}$ on $\mathcal{S}_{\infty}(2^{<\mathbb{N}})$   that was defined in Example \ref{Example:Tree}, one passes to some $B_0 \in [s,D]$ for which there is some  $n_0\in \mathbb{N}$ so that for all $n\geq n_0$ and all $A,A'\leq B_0$ with $A\cap B_0(n-1) = A'\cap B_0(n-1) \neq \emptyset$ we have
\begin{align}
\langle f(A)\rangle\cap B_0(n-1)=\langle f(A')\rangle\cap B_0(n-1). 
\end{align}

Then, one repeats the construction as in the proof of Lemma \ref{L:main}, using Lemma \ref{L:CombTree} below in the place of Lemma \ref{L:CombGen} to obtain a sequence $B_0\geq B_1 \geq B_2\geq \cdots$ in  $[s,D]$ so that each $B_{\ell}$ satisfies properties analogous to $(1)_{\ell}$ and $(2)_{\ell}$ from  the proof of  Lemma \ref{L:main}. The resulting $B:=\bigcap_{\ell}B_{\ell}$ will then satisfy the conclusion of Lemma \ref{L:mainTree}.

Let $X$ be a finite set and let $k,N\in\mathbb{N}$. A $(k,N)$--partition $P$ of  $X$ is any partition $P=\{X_0,\ldots,X_{k-1}\}$ of $X$ into $k$-many pieces so that  $|X_i|\geq N$ for all $i< k$. Let $\mathcal{P}(X)$ denote the power set of $X$. A {\bf section of $P$} is any $E\in\mathcal{P}(X)$ with $|E\cap X_i|\leq 1$ for all $i<k$.  Such a section $E$ is {\bf complete} if  $|E\cap X_i|= 1$ for all $i<k$.  We denote by $\mathcal{T}(P)$ and $\mathcal{CT}(P)$ the sets of all sections and all complete sections of $P$, respectively.

\begin{lemma}
\label{L:CombTree}
For every $k\in\mathbb{N}$ there is some $N\in\mathbb{N}$  so that  if  $P$ is a  $(k,N)$-partition on a finite  set  $X$ and $e\colon \mathcal{T}(P)\to \mathcal{P}(X)$ is any map, then there is some $E\in\mathcal{CT}(P)$ so that for all $F\subseteq E$ we have that either
\begin{equation}\label{EQ:E}
e(F)\subseteq F \; \mathrm{or} \; e(F)\not\subseteq E.
\end{equation}
\end{lemma}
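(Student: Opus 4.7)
The plan is to adapt the probabilistic strategy of the proof of Lemma \ref{L:Comb} to this setting, where partitions of a finite set are replaced by sections of a $(k,N)$-partition. Given $k \in \mathbb{N}$, I would take $N := 2^k + 1$ (any $N > 2^k$ works). For a $(k,N)$-partition $P = \{X_0, \ldots, X_{k-1}\}$ and a map $e \colon \mathcal{T}(P) \to \mathcal{P}(X)$, sample a random complete section $E \in \mathcal{CT}(P)$ by choosing, independently and uniformly, one representative from each $X_i$. The aim is to show that with positive probability, the random $E$ satisfies (\ref{EQ:E}) for every $F \subseteq E$, which yields the existence of a good $E$.

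For each section $F \in \mathcal{T}(P)$, call $F$ \emph{bad for $E$} if $F \subseteq E$, $e(F) \subseteq E$, and $e(F) \not\subseteq F$. The key step is to bound, for each $F$, the conditional probability
\[
P\bigl(e(F) \subseteq E \ \mathrm{and}\ e(F) \not\subseteq F \,\bigm|\, F \subseteq E\bigr) \leq \tfrac{1}{N}.
\]
Indeed, conditional on $F \subseteq E$, the representative $E \cap X_i$ is already pinned down (to the element of $F \cap X_i$) for every $i$ with $F \cap X_i \neq \emptyset$. For $e(F) \subseteq E$ to hold, every element of $e(F)$ lying in such a part must equal the representative, hence must lie in $F$. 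Thus the elements of $e(F) \setminus F$ (which exist since $e(F) \not\subseteq F$) must all lie in parts $X_i$ \emph{disjoint} from $F$, and each such element must coincide with the uniformly random representative of its part. Since each $|X_i| \geq N$, this conditional probability is at most $1/N$.

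Now apply the union bound over $F \in \mathcal{T}(P)$ and interchange the sum with expectation, using that for any fixed complete section $E$, the number of subsets $F \subseteq E$ in $\mathcal{T}(P)$ is exactly $2^{|E|} = 2^k$:
\[
P(\exists F \subseteq E \text{ bad}) \leq \sum_{F \in \mathcal{T}(P)} P(F \subseteq E) \cdot \tfrac{1}{N} = \tfrac{1}{N} \cdot \mathbb{E}\bigl[\,\#\{F \in \mathcal{T}(P) \colon F \subseteq E\}\,\bigr] = \tfrac{2^k}{N} < 1.
\]
Hence some $E \in \mathcal{CT}(P)$ satisfies (\ref{EQ:E}) for every $F \subseteq E$, as required.

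The main obstacle is identifying the correct conditional probability bound: one must recognize that, once $F \subseteq E$ is fixed, the ``new'' elements of $e(F)$ outside $F$ can contribute to $e(F) \subseteq E$ only if they lie in parts untouched by $F$, and that each such contribution gets dampened by a factor $1/|X_i| \leq 1/N$. After that, the counting $\#\{F \subseteq E\} = 2^k$ is immediate and the argument closes cleanly, avoiding the entropy estimates that were needed in the proof of Lemma \ref{L:Comb}.
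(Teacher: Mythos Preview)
Your proof is correct and follows the same probabilistic strategy as the paper's sketch: both bound the number (equivalently, probability) of bad pairs $(F,E)$ and conclude that a good $E\in\mathcal{CT}(P)$ exists once $N$ is large enough. Your presentation is in fact a little tighter than the paper's: you work directly with $|X_i|\geq N$ (avoiding the preliminary reduction to $|X_i|=N$), you make the conditional bound $P(e(F)\subseteq E\mid F\subseteq E)\leq 1/N$ explicit rather than leaving the counting to the reader, and you extract the concrete threshold $N=2^k+1$ instead of an asymptotic statement.
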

\begin{proof}[Proof Sketch]
Let $k\in \mathbb{N}$. Notice that if there is $N\in\mathbb{N}$ such that the conclusion of the lemma holds for every 
$(k,N)$-partition $Q=\{Z_0,\ldots, Z_{k-1}\}$ with $|Z_i|=N$ for all $i<k$, then the conclusion of the lemma holds for all $(k,N)$-partitions. Indeed, let $P=\{X_0,\ldots, X_{k-1}\}$ be any $(k,N)$-partition and let $e\colon \mathcal{T}(P)\to \mathcal{P}(X)$ be any map. For each $i<k$, pick  $Z_i\subseteq X_i$ with $|Z_i|=N$, and set $Q:=\{Z_0,\ldots, Z_{k-1}\}$ and $Z:=\bigcup_i Z_i$. Consider the map $\widehat{e}\colon \mathcal{T}(Q)\to \mathcal{P}(Z)$ with $\widehat{e}(S)=e(S)\cap Z$. By assumption we may find  $E\in \mathcal{CT}(Q)$ so that for all $F\subseteq E$ we have $\widehat{e}(F)\subseteq F$ or $\widehat{e}(F)\not\subseteq E$. But then $E\in \mathcal{CT}(P)$, and for all $F\subseteq E$ we have  $e(F)\subseteq F$ or $e(F)\not\subseteq E$. 

So fix  $N\in \mathbb{N}$ and let $P=\{X_i: i<k\}$ be a partition of some set $X$ so that $|X_i|=N$ for all $i<k$. Let $e\colon \mathcal{T}(P)\to \mathcal{P}(X)$ be any map. We will show that as $N$ approaches $\infty$, the probability that a random $E\in \mathcal{CT}(P)$ satisfies (\ref{EQ:E}) for all $F\subseteq E$ tends to $1$. More precisely, let $\mathcal{B}$ be the collection of all pairs $(F,E)\in \mathcal{T}(P) \times \mathcal{CT}(P)$ for which (\ref{EQ:E}) fails. We claim that the  following ratio $R$ tends to $0$ as $N\to \infty$:
\[R:=\frac{|\{ E \in  \mathcal{CT}(P) \colon \exists F\in \mathcal{T}(P) \; (F,E)\in \mathcal{B}\}|}{|\mathcal{CT}(P)|}.\]
Notice  that the numerator of $R$ is at most $|\mathcal{B}|$. Hence, it suffices to show that   $|\mathcal{B}|/|\mathcal{CT}(P)|\to 0$ as  $N\to\infty$. This follows from the  inequalities:
\begin{align*}
\frac{|\mathcal{B}|}{|\mathcal{CT}(P)|}\leq  \frac{\sum_{m\leq k}{k \choose m} N^m\big(  N^{k-m}- (N-1)^{k-m} \big)}{N^k}\leq \\
\leq   \sum_{m\leq k} {k \choose m} \frac{N^k-(N-1)^k}{N^k}=2^k\big(1-\big((N-1)/N\big)^k\big).
\end{align*}
The second inequality  is clear. The first  inequality uses a simple counting argument, which we leave to the reader.
\end{proof}

\bibliographystyle{alpha}
\bibliography{bibliography}

\end{document}